\documentclass[final,3p,times,twoside]{elsarticle}

\usepackage{amssymb}
\usepackage{amsmath}
\usepackage{amsthm}
\usepackage{natbib}
\usepackage{amssymb}
\usepackage{mathtools}
\usepackage{multirow}
\usepackage{bm}
\usepackage{stmaryrd}
\usepackage{tabularx}
\usepackage{algorithm}
\usepackage{cancel}
\usepackage{algpseudocode}
\usepackage{tikz}
\usepackage{array}
\usepackage{textcomp}
\usepackage{multirow}
\usepackage{hyperref}
\usepackage{xcolor}
\usepackage{eso-pic}
\usepackage{lipsum}

\newtheorem{theorem}{Theorem}
\newtheorem{corollary}{Corollary}
\newtheorem{lemma}{Lemma}
\newtheorem{remark}{Remark}

\journal{Elsevier}

\begin{document}

\begin{frontmatter}

\title{Geometry-Preserving Reduced-Order Modeling via Immersed Tensor Decomposition (ITD)
}

\author[1]{Lei Zhang\fnref{eq}}
\author[2,3]{Jiachen Guo\fnref{eq}}
\author[1]{Guowei He}
\author[4]{Thomas J.R. Hughes}
\author[2,3]{Wing Kam Liu}
\fntext[eq]{These authors contributed equally to this work.}
\address[1]{School of Engineering Science, University of Chinese Academy of Sciences, Beijing 100049, China}

\address[2]{Department of Mechanical Engineering, Northwestern University, 2145 Sheridan Road, Evanston, 60201, IL, USA}

\address[3]{HIDENN-AI, Inc, 1801 Maple Ave, Evanston, 60201, IL, USA}

\address[4]{Oden Institute for Computational Engineering and Sciences, University of Texas at Austin, 201 E 24th St, Austin, 78712, TX, USA}

\begin{abstract}
Body-fitted finite-element methods deliver high-order accuracy but hinge on a clean, watertight, conforming mesh, a requirement that breaks down for the geometrically imperfect CAD assemblies, image-based volumetric data, and voxel-native designs that pervade biomedical engineering and additive manufacturing, where mesh generation has become the dominant cost of the analysis cycle. Immersed methods on regular background Cartesian grids sidestep body-fitted meshing, but classical implementations integrate over irregular cut subdomains, destroying the tensor-product structure that enables separable, reduced-order methods such as tensor decomposition. In this paper we propose the \emph{Immersed Tensor Decomposition} (ITD) framework, which couples a mesh-free geometric representation via body-fitted function with the separable C-HiDeNN-TD reduced-order solver to enable large-scale simulation directly on regular background voxel meshes. The geometry is encoded in three steps: a signed-distance function represents the boundary, a body-fitted function $\Phi$ approximates it with controllable error, and a low-rank Tucker decomposition provides model-order reduction; for a fixed grid spacing $h$, accuracy is improved by raising the approximation order of C-HiDeNN interpolation up to degree $p$ with a linear background mesh. The central contribution is an exact Dirichlet formulation that enforces the boundary condition strongly by multiplying the trial function with $\Phi$, so that $u=g$ holds by construction without any variational penalty or interface quadrature. We derive the corresponding transformed weak form, and establish an a priori error estimate for the formulation and assess it on canonical 2D/3D domains, demonstrating optimal convergence and robustness on non-Cartesian geometries discretized by regular voxel meshes.
\end{abstract}

\begin{keyword}
Immersed method, Tensor decomposition,  Exact Dirichlet formulation, Reduced-order tensor decomposition modeling, Geometry-preserving analysis
\end{keyword}

\end{frontmatter}

\noindent\textbf{Highlights:}
\begin{itemize}

\item Geometry-Preserving ITD Framework: Enables large-scale simulation on voxel-native datasets by coupling mesh-free geometric representation with C-HiDeNN-TD solvers.
\item Exact Dirichlet Enforcement: Strong boundary condition satisfaction via body-fitted functions eliminates the need for variational penalties or complex interface quadrature.

\item Optimized Numerical Performance: Optimal $O(h^{p+1})$ convergence is established through a priori error analysis

\end{itemize}


\section{Introduction}
\label{sec:introduction}

Body-fitted finite-element methods (FEM) deliver high-order accuracy on smooth Computer-Aided Design (CAD) geometries, achieving the optimal $\mathcal{O}(h^{p+1})$ convergence rate in the $L^2$ norm for polynomials of degree $p$\cite{hughes2003finite}. This accuracy, however, is predicated on the availability of a clean, watertight, conforming mesh. In modern computational-mechanics workflows, generating such a mesh has become the dominant cost: industrial CAD assemblies routinely contain slivers, gaps, and overlapping features, and the associated geometric clean-up and mesh generation consume the majority of the analysis cycle rather than the solution itself~\cite{hughes2005iga}. The pursuit of analysis frameworks that relax the conforming-mesh requirement is therefore a central theme of contemporary numerical analysis.

In several application areas the very premise of a clean parametric CAD file no longer holds. In biomedical engineering and geophysics, geometries such as trabecular bone microstructures, soft-tissue organs, and porous rock formations are acquired by Computed Tomography (CT) or Magnetic Resonance Imaging (MRI), which produce volumetric density fields rather than boundary representations (B-reps)~\cite{schillinger2015fcm}. In additive manufacturing---particularly multi-material printing and topology optimization---the design space is intrinsically voxelated, with spatially varying material properties and lattice infills that resist a clean B-rep description\cite{saha2025efficient}. Converting such volumetric or voxel-native data into smooth NURBS surfaces is both computationally expensive and information-lossy. These trends motivate analysis frameworks that operate \emph{directly} on volumetric representations, bypassing surface reconstruction altogether.

Immersed (or embedded) methods sidestep body-fitted meshing by discretizing the governing partial differential equations on a regular Cartesian background mesh, on which the physical domain $\Omega$ is defined implicitly---most often through a level-set or signed-distance function. This idea underlies the finite cell method and its higher-order and image-based extensions~\cite{parvizian2007fcm,duster2008fcm,schillinger2015fcm}, among other fictitious-domain techniques. On the immersed Dirichlet boundary $\partial\Omega$, the constraint $u = g$ can be imposed either weakly, through Nitsche-type penalization of the boundary residual~\cite{nitsche1971,embar2010,burman2010,burman2012}, or \emph{strongly}, through trial-function multiplication. The latter is exemplified by the $\phi$-FEM approach~\cite{duprez2020phi,duprez2023phifem}, in which the trial function is constructed to vanish on $\partial\Omega$ by multiplication with a level-set function, so that the boundary condition is satisfied exactly by construction rather than enforced variationally.

Classical implementations of these strategies share a common difficulty: they require numerical integration over geometrically irregular cut subdomains, and often along the immersed interface, typically via cut-cell or shifted-boundary quadrature~\cite{main2018shifted}. This integration complicates assembly and, crucially, breaks the tensor-product structure of the underlying Cartesian mesh---the very structure that makes regular grids attractive for large-scale computation in the first place.

Preserving the tensor-product structure of the background grid is especially valuable because it is the prerequisite for reduced-order tensor methods. Fields defined on a regular Cartesian grid admit accurate low-rank Tucker and Canonical-Polyadic (CP) representations, and the trial space itself can be built in a separable format such as Proper Generalized Decomposition (PGD)~\cite{ammar2006pgd,chinesta2010pgd} or the more recent Convolution Hierarchical Deep-learning Neural Network with tensor decomposition (C-HiDeNN-TD)~\cite{lu2023chidenn,guo2024chidenntd,guo2024taps}, which together turn $d$-dimensional problems into a sequence of one-dimensional ones and break the curse of dimensionality. Such separable solvers, however, require every operator in the weak form to retain a tensor-product form---precisely what cut-cell quadrature destroys.

A natural route to reconcile the two is to represent the geometry by a \emph{smooth} implicit field~\cite{liLowengrub2009diffuse,burger2017diffuse} and to enforce the Dirichlet condition by multiplying the trial space with a body-fitted mask that vanishes on $\partial\Omega$ and itself admits a low-rank tensor representation. All integrals then remain volume integrals over the regular grid and reduce to tensor-product Gauss quadrature, with no cut-cell bookkeeping. Yet, despite this apparent compatibility, a systematic integration of strong, body-fitted immersed enforcement with separable tensor-decomposition solvers has not, to our knowledge, been reported. Several practical issues remain open, including the robust evaluation of the mask and its derivatives from voxelated signed-distance data and the control of conditioning in the fictitious (void) region.

In this paper we propose the \emph{Immersed Tensor Decomposition} (ITD) framework, which couples a body-fitted function with the C-HiDeNN-TD reduced-order solver~\cite{lu2023chidenn,guo2024chidenntd,guo2024taps} to enable large-scale simulation on non-Cartesian domains using regular voxel meshes. The specific contributions are as follows.
\begin{itemize}
    \item \textbf{An exact Dirichlet immersed C-HiDeNN-TD formulation.} The trial function is multiplied by a body-fitted function $\Phi$ (BFF) based on the signed distance function (SDF) that vanishes on $\partial\Omega$, yielding \emph{strong}, exact enforcement of the Dirichlet condition without any variational penalty or interface quadrature. We derive the corresponding transformed weak form, in which a key algebraic cancellation reduces the geometric coefficients that arise naively (five in 2D, six in 3D) to only three Tucker-decomposable fields, $\Phi$, $\Phi^2$, and $\Phi\,\nabla^2\Phi$. This reduction is what makes the formulation tractable in a separable tensor-decomposition setting.
    \item \textbf{An a priori error analysis.} We establish an a priori error estimate for the formulation (Theorem~\ref{thm:error}), characterizing its convergence behavior on immersed domains.
    \item \textbf{Numerical study.} The formulation is evaluated on canonical 2D circular and annular domains, a 3D spherical domain, and complex starfish- and gear-shaped domains, demonstrating optimal convergence and robustness on non-Cartesian geometries discretized by regular voxel meshes.
\end{itemize}

The remainder of the paper is organized as follows. Section~\ref{sec:exact} develops the exact Dirichlet immersed C-HiDeNN-TD, which constitutes the main contribution of this work. Section~\ref{sec:numerical} presents numerical examples, Section~\ref{sec:discussions} contains a discussion, and Section~\ref{sec:CONCLUSION} outlines conclusions and future work. The proof of Theorem~\ref{thm:error} is given in Appendix~\ref{appenx:Proof}.

\section{Geometric Representation using mesh-free body-fitted functions}
\label{sec:geometric_representation}

Different geometric representations have been widely adopted across various computational domains based on their specific application requirements. In computer graphics and geometric modeling, researchers have extensively leveraged the signed distance function (SDF) as an efficient, continuous mathematical representation of shapes. By definition, an SDF calculates the shortest Euclidean distance from any spatial point $\mathbf{x}$ to the domain boundary $\partial \Omega$, with the sign designating whether the point resides inside (typically negative) or outside the domain. This implicit formulation is highly advantageous for parametric modeling in the latent space; a notable example is DeepSDF, which utilizes deep auto-decoders to learn continuous SDFs for high-fidelity 3D shape generation and completion \cite{park2019deepsdf}. 

\begin{figure}[htbp]
\centering
\includegraphics[width=0.4\textwidth]{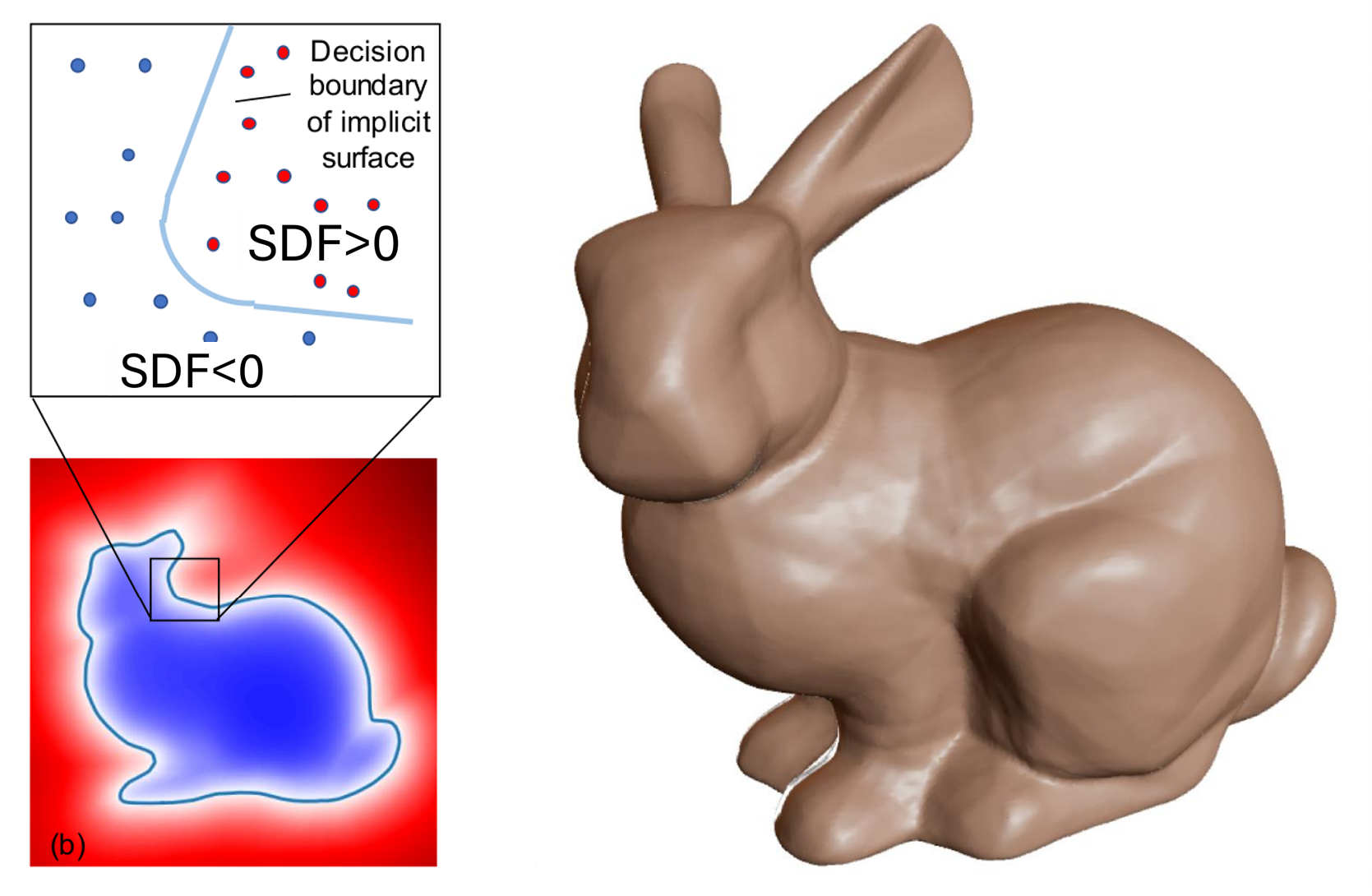}
\caption{The definition of signed distance function\cite{park2019deepsdf}}
\label{fig:sdf}
\end{figure}

Despite these useful mathematical characteristics, SDFs cannot be directly utilized as geometric descriptors for rigorous physics-based simulation, particularly in the field of Computer-Aided Engineering (CAE). In CAE, spatial fields such as material density or structural compliance possess distinct, conserved physical meanings within the domain $\Omega$. Because the SDF strictly describes relative proximity to the boundary $\partial \Omega$ without capturing interior field physics, it fundamentally lacks physical interpretation across the problem domain. 

To bridge this gap, we propose a new geometric representation termed \emph{body-fitted functions} (BFF), whose support is precisely the computational domain $\Omega$, defined by
\begin{equation}
    \Phi(\mathbf{x}) = \left\{\begin{array}{cc}
        =0, & \text{exterior} \\
        =0, & \text{on } \partial\Omega \\
        >0, & \text{interior}
    \end{array}\right.
\end{equation}
with a constraint of non-vanishing interior boundary gradient
\begin{equation}
    \lim_{\mathbf{x}\in \partial \Omega, \eta \to 0^+} |\nabla \Phi(\mathbf{x}-\eta \bm{n}) | \geq a >0,
\end{equation}
where $\bm{n}$ is the unit outward normal vector, and $a$ is a positive lower bound constant.
 The BFF thus acts as a smooth indicator of the physical domain: it is positive within the phyical domain, identically zero outside and on the boundary, and possesses a non-vanishing normal gradient at the boundary.

Specifically, we develop a formulation in which Dirichlet boundary conditions are enforced \emph{strongly} by multiplying the trial function $v^h$ with a body-fitted mask $\Phi(\mathbf{x})$ that strictly vanishes on $\partial \Omega$ there regardless of the discrete representation of $v^h$. Historically, meshfree methods and physics-informed machine learning have relied on approximate distance functions to satisfy kinematic admissibility a priori \cite{sukumar2021exact}. Our approach guarantees exact geometric enforcement, entirely eliminating the boundary-residual penalty terms, and the associated hyperparameter tuning, which are required by traditional Nitsche-type methods. This geometric enforcement is achieved at the price of an additional pre-computation effort to assemble the geometric tensors that arise in the bilinear form.

\subsection{SDF-based construction of body-fitted functions}

Importantly, the definition of the BFF does not prescribe a specific functional form: any sufficiently smooth function meeting these conditions qualifies as a body-fitted function. Different applications may call for different constructions, for instance, a polynomial level-set, a tanh-based phase field, or a data-driven neural implicit.

While the general definition admits multiple constructions, a particularly convenient route exploits the SDF, which is often already available from the geometry. Here, we adopt the following SDF-based exponential mask as the BFF:
\begin{equation}
    \Phi_\beta(\mathbf{x}) \;=\;
    \begin{cases}
        1 - \mathrm{e}^{\,\mathrm{SDF}(\mathbf{x})/\beta}, & \mathbf{x} \in \Omega,\\
        0, & \mathbf{x} \notin \Omega,
    \end{cases}
    \label{eq:Phi_def}
\end{equation}
which decays exponentially toward the boundary at a rate set by a scaling parameter $\beta$. By operating on the standard convention where $\mathrm{SDF}(\mathbf{x}) \le 0$ inside the domain, this formulation guarantees that $\Phi(\mathbf{x})$ smoothly approaches zero at the boundary while avoiding the gradient singularities often introduced by absolute distance fields. The parameter $\beta$ controls how sharply $\Phi$ transitions from 0 to approximately 1. The proposed body-fitted geometry is illustrated in Figure~\ref{fig:Phi_def}.

\begin{figure}[htbp]
\centering
\includegraphics[width=3in]{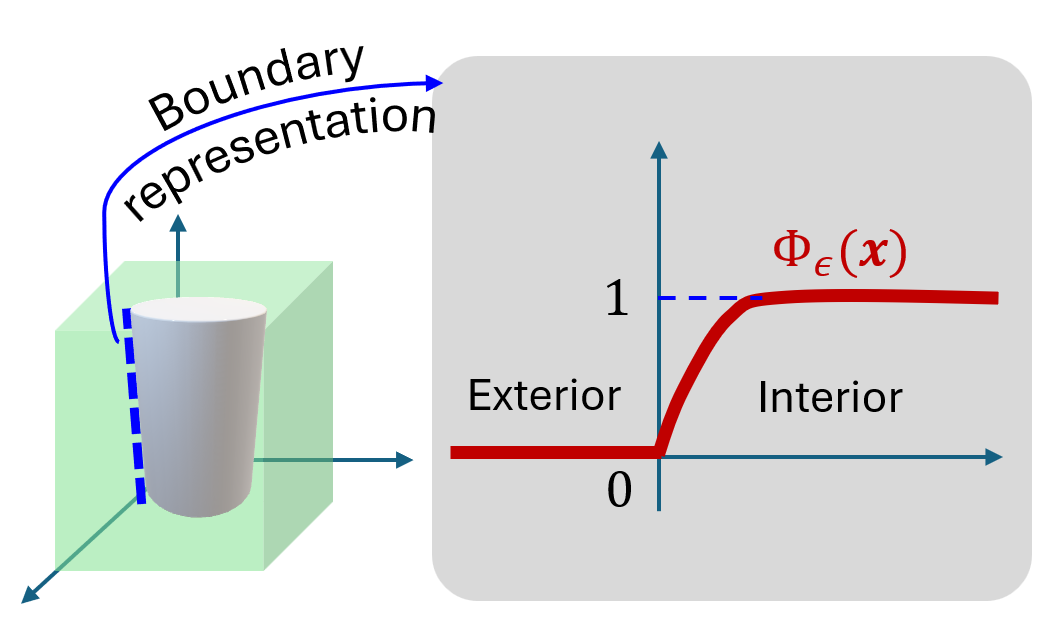}
\caption{Body-fitted function transforms SDF into a geometric descriptor that can automatically satisfy the Dirichlet boundary condition.}
\label{fig:Phi_def}
\end{figure}

\subsection{Tucker decomposition of body-fitted functions}

Building upon the proposed body-fitted geometric representation, we next seek to solve the governing partial differential equations (PDEs) through a reduced-order model (ROM) constructed in a purely \emph{data-free} manner. Traditional ROM techniques, such as Proper Orthogonal Decomposition (POD), typically require an expensive offline phase to generate high-fidelity snapshot data to form a reduced basis. To circumvent this computational bottleneck, we leverage Tucker decomposition to formulate a low-rank representation of the BFF $\Phi(\mathbf{x})$ using C-HiDeNN Tucker Decomposition as shown in Eq. \ref{eq:tucker_Phi}.
\begin{equation}
\Phi(x,y,z)\approx\; \sum_{p=1}^{P}\sum_{q=1}^{Q}\sum_{k=1}^{K} \mathcal{G}_{pqk}\,\Phi_x^{(p)}(x)\,\Phi_y^{(q)}(y)\,\Phi_z^{(k)}(z),
\label{eq:tucker_Phi}
\end{equation}
where $P,Q,K$ are the number of Tucker modes; $\Phi_{x_i}^{(p)}(x_i)$ refers to the Tucker factor in each mode $p$ and dimension $x_i$; $\mathcal{G}_{pqk}$ is the core tensor that has a reduced dimension of $P\times Q \times K$. Each factor is interpolated using C-HiDeNN shape function $\bm{\widetilde{N}}(x_i)$ \cite{guo2024taps}.

\begin{equation}
    u^{h}(x_i)=\sum_{k=1} ^ {nnode}\widetilde{N}_{k}(x_i;a_{k},s_{k},p_{k}, \mathcal{A}_k)u_{k}
\label{full_chidenn}
\end{equation}

\noindent
where $u^{h}(x_i)$ is the approximated function; $nnode$ is the total number of nodes and $k$ is the nodal index; $\widetilde{N}_{k}(x_i)$ is the C-HiDeNN shape function at the $k$-th node in dimension $x_i$; $a_k,s_k,p_k, \mathcal{A}_k$ are the node-wise hyperparameters in the C-HiDeNN shape function, which controls the smoothness of the interpolation.  It should be noted that these hyperparameters  can vary across different nodal patches since C-HiDeNN can optimize these hyperparameters like machine learning parameters, rendering an adaptable functional space without altering the number of global nodes or hidden layers. Detailed formulation of C-HiDeNN shape function can be found in \cite{lu2023chidenn,park2023convolution}. 

\begin{figure}[htbp]
\centering
\includegraphics[width=0.7\textwidth]{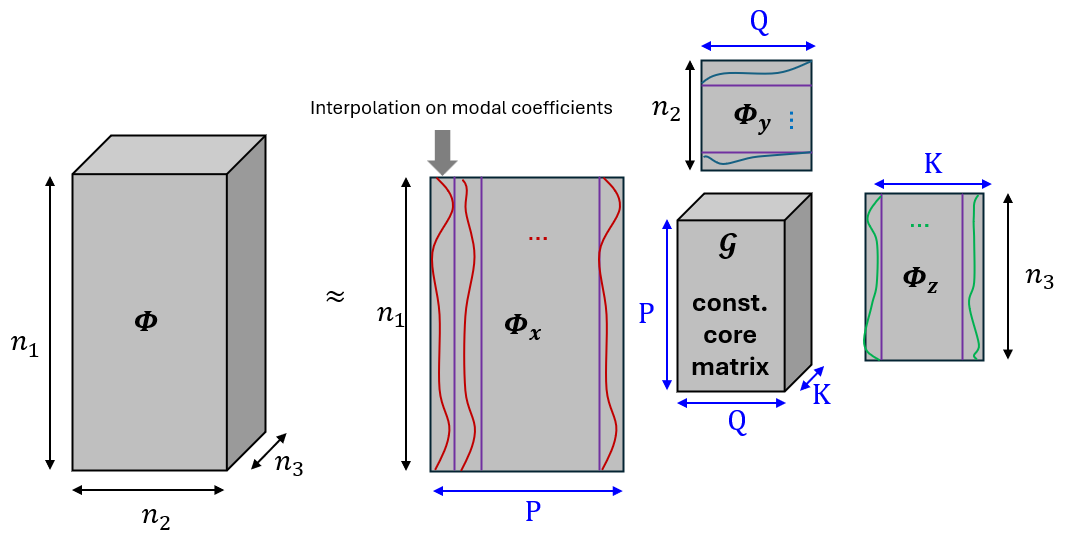}
\caption{The body-fitted function is compressed using C-HiDeNN Tucker Decomposition}
\label{fig:tucker}
\end{figure}

As shown in Fig \ref{fig:tucker}, by expressing the high-dimensional discretized space as a compact core tensor multiplied by corresponding factor matrices along each spatial mode, Tucker decomposition systematically reduces the geometric complexity. Operating directly within this low-rank tensor format allows us to evaluate and solve the governing equations in the low-rank representation. Most importantly, projecting the formulation into this latent space drastically reduces the total number of unknowns required to solve the system, yielding significant computational savings while preserving the rigorous enforcement of boundary conditions and the underlying physics.


\section{Exact Dirichlet immersed method}
\label{sec:exact}

With the body-fitted geometric representation established, we now construct the exact Dirichlet immersed formulation. The trial solution is built by multiplying the unknown field with $\Phi$, thereby satisfying the Dirichlet condition by construction. The weak form is then derived, and a key algebraic cancellation drastically reduces the number of geometric coefficients. 

We consider the following Poisson equation defined in an arbitrary domain $\Omega$:
\begin{equation}
    \nabla^2u(\mathbf{x}) + b(\mathbf{x}) = 0 \quad \text{in } \Omega,
    \label{eq:poisson_strong}
\end{equation}
which is subjected to the Dirichlet boundary condition:
\begin{equation}
    u(\mathbf{x}) =\; g(\mathbf{x}) \quad \text{on } \partial\Omega,
    \label{eq:dirichlet_bc}
\end{equation}
where $u(\mathbf{x})$ is the solution; $b(\mathbf{x})$ is the forcing term; $g(\mathbf{x})$ is the boundary condition.

\subsection{Strong Dirichlet enforcement using body-fitted function}
\label{sec:exact:strong}

We introduce a Cartesian background region $\mathcal{D}$ that covers the computational domain $\Omega$, as illustrated in Fig.~\ref{fig:background_mesh}. A structured Cartesian mesh is laid over $\mathcal{D}$, independent of the geometry of $\Omega$. The mesh nodes do not necessarily lie on $\partial\Omega$, and the elements are in general incompatible with the geometry. This incompatibility is resolved by the BFF $\Phi$, which encodes $\Omega$ exactly as defined in Section~\ref{sec:geometric_representation}. Using a regular Cartesian grid offers two principal advantages: (1) it eliminates mesh generation at essentially zero cost, since the grid is constructed by tensor-product subdivision of $\mathcal{D}$; and (2) the tensor-product structure is naturally compatible with the Tucker-decomposition-based reduced-order formulation developed in Section~\ref{sec:itd:exact}, where the separation of variables along each Cartesian direction enables significant computational savings.

\begin{figure}[htbp]
\centering
\includegraphics[width=1.5in]{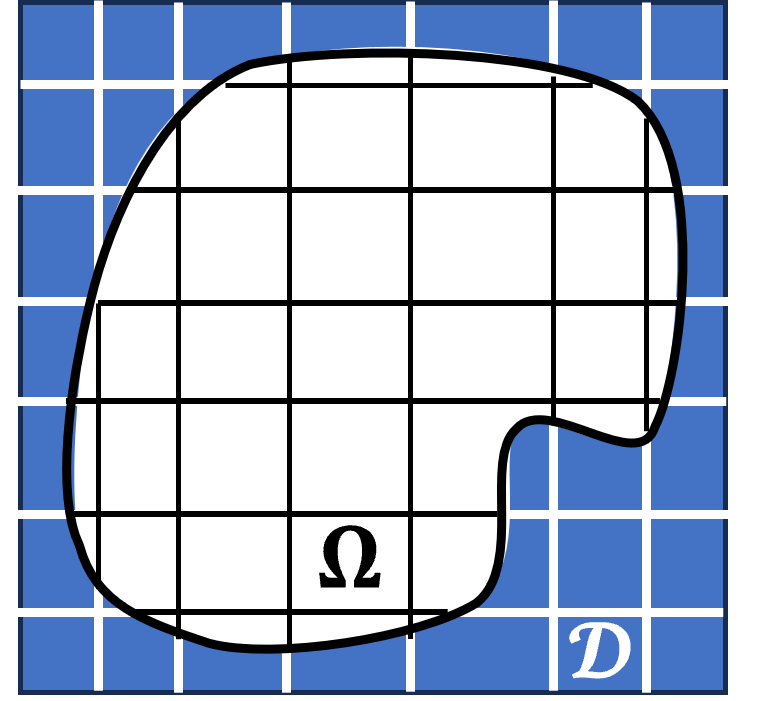}
\caption{An arbitrary domain $\Omega$ is embedded into a Cartesian background region $\mathcal{D}$ and covered by a structured Cartesian mesh.}
\label{fig:background_mesh}
\end{figure}

The trial solution defined on the Cartesian background mesh is constructed as
\begin{equation}
    u^h(\mathbf{x}) \;=\; \Phi(\mathbf{x})\,v^h(\mathbf{x}) + g^h(\mathbf{x}),
    \label{eq:strong_split}
\end{equation}
where $g^h$ is a sufficiently smooth extension of the Dirichlet boundary condition into the background domain $\mathcal{D}$, and $\Phi$ is a BFF satisfying $\Phi(\mathbf{x}) > 0$ in $\Omega$, $\Phi = 0$ on $\partial \Omega$, and $\Phi \equiv 0$ in the void $\mathcal{D}\setminus\overline{\Omega}$. Equation~\eqref{eq:strong_split} therefore enforces $u^h = g^h = g$ exactly on $\partial \Omega$, regardless of the discrete representation of $v^h$, even though the background Cartesian mesh may not conform to the geometry $\Omega$. 

\subsubsection{Weak form}

Substituting Eq. \eqref{eq:strong_split} into Eq. \eqref{eq:poisson_strong}, the auxiliary variable $v$ satisfies
\begin{equation}
    \nabla^2(\Phi v^h) \;=\; -\bigl(\nabla^2 g^h + b\bigr) \quad \text{in } \Omega,
    \label{eq:strong_v_pde}
\end{equation}
where $\nabla^2 g^h + b$ become a new source term.

Using $\Phi w^h$ as the test function (so that $\Phi w^h = 0$ on $\partial \Omega$ and the boundary contributions on $\partial \Omega$ vanish), multiplying Eq. \eqref{eq:strong_v_pde} by $\Phi w^h$ and integrating over $\mathcal{D}$ yields, after integration by parts,
\begin{equation}
    \int_{\mathcal{D}} \nabla(\Phi w^h)\cdot \nabla(\Phi v^h) \,\mathrm{d}\mathcal{D}
    \;=\; \int_{\mathcal{D}} \Phi\, w^h\,\bigl(\nabla^2 g^h + b\bigr) \,\mathrm{d}\mathcal{D}.
    \label{eq:weak_strong_pre}
\end{equation}

Expanding the bilinear form,
\begin{equation}
    \nabla(\Phi w^h)\cdot \nabla(\Phi v^h) \;=\;
    \Phi^2\,(\nabla w^h \cdot \nabla v^h)
    + \Phi\, w^h\,(\nabla v^h \cdot \nabla \Phi)
    + \Phi\, v^h\,(\nabla w^h \cdot \nabla \Phi)
    + |\nabla \Phi|^2\,w^h\, v^h.
    \label{eq:phiw_phiv_expanded}
\end{equation}
The two cross terms combine via $\Phi w^h(\nabla v^h \cdot \nabla \Phi) + \Phi v^h(\nabla w^h \cdot \nabla \Phi) = \tfrac{1}{2}\,\nabla(\Phi^2)\cdot \nabla(w^h v^h)$, and a further integration by parts, together with cancellation with the $|\nabla \Phi|^2 w^h v^h$ term,  yields
\begin{equation}
    \int_{\mathcal{D}} \Phi^2\,(\nabla w^h \cdot \nabla v^h) \,\mathrm{d}\mathcal{D}
    \;-\; \int_{\mathcal{D}} (\Phi\,\nabla^2 \Phi)\, w^h\, v^h \,\mathrm{d}\mathcal{D}
    \;=\; \int_{\mathcal{D}} \Phi\, w^h\,\bigl(\nabla^2 g^h + b\bigr) \,\mathrm{d}\mathcal{D}.
    \label{eq:weak_strong_unstabilized}
\end{equation}

The transformed weak form~\eqref{eq:weak_strong_unstabilized} is algebraically equivalent to the standard weak form obtained by substituting~\eqref{eq:strong_split} into the Poisson equation; both describe exactly the same continuum problem. The difference lies in the structure of the bilinear form. In a naive expansion of the bilinear form~\eqref{eq:phiw_phiv_expanded}, five (2D) or six (3D) distinct geometric coefficient fields appear: $\Phi^2$, $\Phi\,\nabla\Phi$ (two or three vector components), $|\nabla\Phi|^2$, and $\Phi$ on the right-hand side, each requiring an independent low-rank Tucker approximation in the reduced-order setting. After the algebraic cancellation, the bilinear form involves only three geometric fields, and $v^h$ couples to the geometry through only two bilinear integrals: a weighted stiffness term ($\Phi^2 \nabla w^h \cdot \nabla v^h$) and a weighted mass-like term ($\Phi\,\nabla^2\Phi\, w^h v^h$). The right-hand side involves a single geometric coefficient $\Phi$. Consequently, in the subsequent tensor-decomposition ROM (Section~\ref{sec:itd:exact}), only three coefficient fields must be Tucker-approximated, directly reducing the offline pre-computation and the per-mode assembly cost by nearly a half.

\subsubsection{Stabilization in the void}

In the void $\mathcal{D}\setminus\overline{\Omega}$, both $\Phi$ and $\Phi^2$ vanish, so the bilinear form in Eq. \eqref{eq:weak_strong_unstabilized} is degenerate and yields a singular stiffness matrix. To stabilize the discrete system, we add an isotropic stiffness in $\mathcal{D}\setminus\overline{\Omega}$:
\begin{equation}
    \int_{\mathcal{D}} \bigl(\Phi^2 + \gamma(\mathbf{x})\bigr)\,(\nabla w^h \cdot \nabla v^h) \,\mathrm{d}\mathcal{D}
    \;-\; \int_{\mathcal{D}} (\Phi\,\nabla^2 \Phi)\, w^h\, v^h \,\mathrm{d}\mathcal{D}
    \;=\; \int_{\mathcal{D}} \Phi\, w^h\,\bigl(\nabla^2 g^h + b\bigr) \,\mathrm{d}\mathcal{D},
    \label{eq:weak_strong}
\end{equation}
where $\gamma(\mathbf{x}) \ll 1$ is a small stabilization function.

Compared with existing immersed methods that enforce Dirichlet conditions through boundary terms, the stabilization strategy in Eq.~\eqref{eq:weak_strong} is structurally simpler. Nitsche-type methods~\cite{nitsche1971,embar2010,burman2010,burman2012,nguyen2018diffuse}, in both their sharp-interface and diffuse-interface forms, require a consistency term, an adjoint consistency term, and a penalty term. Each of these involves the boundary measure and each demands a separate Tucker compression in the reduced-order setting. The $\phi$-FEM approach~\cite{duprez2020phi,duprez2023phifem}, while sharing the idea of trial-function multiplication with the present work, still relies on a boundary stabilization to control the normal-flux mismatch across the immersed interface, introducing additional surface-localized integrals. In contrast, the proposed formulation stabilizes the system entirely through the volumetric stiffness shift $\gamma(\mathbf{x})$ in the void. Since the Dirichlet condition is satisfied strongly by construction, no boundary residual, no penalty parameter, and no interface quadrature are needed. This has two practical consequences for tensor-decomposition reduced-order models: (1) only volume fields are Tucker-compressed, rather than multiple boundary-localized fields; (2) the global domain decomposition in TD proceeds uniformly across the entire background $\mathcal{D}$ without partitioning at the immersed interface, and the 1D sub-problems inherit only smooth volumetric coefficient fields.


\subsection{Error estimate}
\label{sec:exact:error}

The exact Dirichlet formulation \eqref{eq:weak_strong} admits the following a priori error analysis. A sketch of the proof is referred to ~\ref{appenx:Proof}. We emphasize that Eq.~\eqref{eq:Phi_def} is merely one concrete realization of the BFF concept. The theoretical framework developed in this Subsection is independent of such construction, requiring only the defining properties and sufficient regularity of $\Phi$.

To facilitate the analysis we recall two extension operators. The \emph{Stein extension}~\cite{stein1967integrales, stein1970singular} is a total extension operator $E: H^s(\Omega) \to H^s(\mathbb{R}^d)$ that extends Sobolev functions from a Lipschitz domain $\Omega$ to the whole space while preserving the $H^s$ regularity. For the error estimate we require a Stein extension $\tilde{v} \in H^{p+1}(\mathcal{D})$ that coincides with $v$ on $\Omega$ and possesses the same smoothness as the exact solution across the background domain. The \emph{harmonic extension} $\hat{v}$ of $v$ into the void $\mathcal{D}\setminus\overline{\Omega}$ is defined as the solution of $\Delta \hat{v} = 0$ in $\mathcal{D}\setminus\overline{\Omega}$ with $\hat{v} = v$ on $\partial\Omega$, and $\hat{v} = v$ in $\Omega$, whose weak form is precisely the stabilization term with the weighting coefficient $\gamma(\mathbf{x})$. While the harmonic extension exactly maintains the Laplacian of the interior solution in the void that differs from the original equation, it is generally only $C^0$ across $\partial\Omega$, whereas the Stein extension preserves the full $H^{p+1}$ regularity at the expense of not satisfying any PDE in the void. The mismatch $\|\hat{v} - \tilde{v}\|_{H^1(\mathcal{D}\setminus\overline{\Omega})}$ appearing in the error bound quantifies this regularity gap.

\begin{theorem}\label{thm:error}

Let $\mathcal{D}\subset \mathbb{R}^d$ be a bounded Lipschitz-continuous Cartesian background domain, and let $\Omega\subset \mathcal{D}$ be the physical domain with a $C^2$-smooth boundary $\partial \Omega$.
Let the exact solution be $u = \Phi v \in H^{p+1}(\Omega)$. There exist a Stein extension $\tilde{v} \in H^{p+1} (\mathcal{D})$ and a harmonic extension $\hat{v}$ such that $\tilde{v}|_{\Omega}=\hat{v}|_{\Omega} = v|_{\Omega}$. Let the exact solution outside $\Omega$ be $u=\Phi \hat{v}$.
Let $u^h = \Phi v^h + g^h$ be the discrete C-HiDeNN approximation obtained from Eq. \eqref{eq:weak_strong} with mesh size $h$, polynomial order $p$, stabilization function $\gamma(\mathbf{x}) \geq 0$. Then there exist constants $C_1, C_2, C_3>0$, independent of $h$, such that the following error estimate holds
\begin{equation} \label{eq:error_estimate}
    \| u - u^h \|_{\gamma} \;\leq\; C_1 h^{p}\, \|\tilde{v}\|_{H^{p+1}(\mathcal{D})} + C_2 \sup_{\mathbf{x}\in\Omega} \gamma(\mathbf{x}) \sup_{\eta^h \in \mathcal{V}^h\setminus\{0\}} \dfrac{\|\nabla \eta^h/\Phi\|_{L^2(\Omega)}}{\|\nabla \eta^h\|_{L^2(\Omega)}} \cdot \|u\|_{H^1(\Omega)}
    + C_3 \sqrt{\sup_{\mathbf{x}\in\mathcal{D}\setminus\Omega} \gamma(\mathbf{x})}\, \|\hat{v} - \tilde{v}\|_{H^1(\mathcal{D}\setminus\overline{\Omega})}, 
\end{equation}
where the norm $\|\cdot \|_\gamma$ is defined by
\begin{equation} 
	\| w \|_{\gamma} = \sqrt{\int_{\Omega} (\nabla w)^2 \mathrm{d} \Omega + \int_{\mathcal{D}} \gamma(\mathbf{x}) \left[ \nabla (w/\Phi) \right]^2 \mathrm{d} \mathcal{D}}.
\end{equation}

\end{theorem}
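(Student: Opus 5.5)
The plan is a Strang-type argument: view Eq.~\eqref{eq:weak_strong} as a Galerkin method for a perturbed problem, and split the error into a best-approximation part, a consistency part from $\gamma$ inside $\Omega$, and a consistency part from $\gamma$ in the void. Write $a_\gamma(v,w)=\int_{\mathcal{D}}(\Phi^2+\gamma)\nabla v\cdot\nabla w-\int_{\mathcal{D}}\Phi\nabla^2\Phi\, vw$. Undoing the integration by parts that led from \eqref{eq:phiw_phiv_expanded} to \eqref{eq:weak_strong_unstabilized}, we have
\[
a_\gamma(v,w)=\int_{\Omega}\nabla(\Phi v)\cdot\nabla(\Phi w)+\int_{\mathcal{D}}\gamma\,\nabla v\cdot\nabla w .
\]
Hence $a_\gamma(v,v)=\|\Phi v\|_\gamma^2$, where $\|\cdot\|_\gamma$ is applied to $w=\Phi v$, so $w/\Phi=v$. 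This gives coercivity (in fact an isometry) in exactly the norm of the theorem, with constant one. Since $u-u^h=\Phi(\hat v-v^h)$ (the $g^h$ contributions cancel after shifting the data), the goal becomes bounding $\|\Phi(\hat v-v^h)\|_\gamma$.

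\textbf{Step 1 (consistency error).} Insert the exact $\hat v$ into the discrete equation. The unstabilized form is consistent on $\Omega$ because $\Phi w^h$ vanishes on $\partial\Omega$. The residual is therefore $\int_{\mathcal{D}}\gamma\nabla\hat v\cdot\nabla w^h$, which splits into an $\Omega$ part and a void part. In the void, $\hat v$ is harmonic, so integrating by parts leaves only boundary flux terms. Instead of tracking those fluxes, compare with $\tilde v$. We have $\int_{\mathcal{D}\setminus\Omega}\gamma\nabla\hat v\cdot\nabla w^h=\int\gamma\nabla(\hat v-\tilde v)\cdot\nabla w^h+\int\gamma\nabla\tilde v\cdot\nabla w^h$. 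The first term is bounded by Cauchy--Schwarz in the $\gamma$-weighted norm, giving $\sqrt{\sup\gamma}\,\|\hat v-\tilde v\|_{H^1}\,\|\Phi w^h\|_\gamma$, which is the $C_3$ term. The second term is absorbed into the approximation argument, since the interpolant is taken of $\tilde v$. For the $\Omega$ part, $\int_\Omega\gamma\nabla v\cdot\nabla w^h\le\sup_\Omega\gamma\,\|\nabla v\|_{L^2(\Omega)}\|\nabla w^h\|_{L^2(\Omega)}$. Two conversions are then needed. First, $\|\nabla w^h\|$ is converted to $\|\nabla(\Phi w^h)/\Phi\|$-type quantities using the discrete ratio supremum in the statement (with $\eta^h=\Phi w^h$). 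Second, $\|\nabla v\|_{L^2(\Omega)}$ is controlled by $\|u\|_{H^1(\Omega)}$ using a Hardy inequality near $\partial\Omega$; this uses the $C^2$ boundary and the nonvanishing gradient of $\Phi$. The result is the $C_2$ term.

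\textbf{Step 2 (approximation).} Take $I_hv$ to be the C-HiDeNN interpolant of the Stein extension $\tilde v$. C-HiDeNN reproduces degree-$p$ polynomials, so $\|\tilde v-I_h\tilde v\|_{H^1(\mathcal{D})}\le Ch^p\|\tilde v\|_{H^{p+1}(\mathcal{D})}$. Bounding $\|\Phi(\tilde v-I_h\tilde v)\|_\gamma$ requires $\Phi\in W^{1,\infty}$ and $\gamma$ bounded, which gives the $C_1$ term. Combining via $\|\Phi(\hat v-v^h)\|_\gamma\le\|\Phi(\hat v-\tilde v)\|_\gamma+\|\Phi(\tilde v-I_h\tilde v)\|_\gamma+\|\Phi(I_h\tilde v-v^h)\|_\gamma$ and applying the isometry to the last, discrete, term produces the three contributions. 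The first term vanishes on $\Omega$, and its void part is again the $C_3$ term.

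\textbf{Main obstacle.} The hardest step is the interior $\gamma$-consistency term. Bounding $\nabla w^h$ in the $\Phi$-weighted energy norm degenerates at $\partial\Omega$ because $\Phi\to0$ there. This is why the statement carries the discrete ratio $\|\nabla\eta^h/\Phi\|/\|\nabla\eta^h\|$ rather than a clean constant. I would first attempt to prove finiteness of this ratio through an inverse estimate combined with Hardy's inequality on the finite-dimensional space; failing that, I would leave it as an explicit mesh-dependent factor, as stated.
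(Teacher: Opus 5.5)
Most of your architecture matches the paper's proof: the identity $a_\gamma(v,v)=\|\Phi v\|_\gamma^2$, the Strang/Galerkin splitting around $\Phi I_h\tilde v$, the $C_1$ term from $\tilde v-I_h\tilde v$, the $C_3$ term from $\Phi(\hat v-\tilde v)$, and the interior Cauchy--Schwarz bound. The void part of the consistency residual, however, does not close. You keep $\int_{\mathcal{D}\setminus\Omega}\gamma\nabla\tilde v\cdot\nabla w^h$ and say it is absorbed into the approximation argument. It cannot be: it contains $\tilde v$ itself rather than $\tilde v-I_h\tilde v$, and it does not decrease with $h$. Running your own decomposition with $e^h=I_h\tilde v-v^h$ gives $\|\Phi e^h\|_\gamma^2=a_\gamma(I_h\tilde v-\tilde v,e^h)+\int_{\mathcal{D}}\gamma\nabla\tilde v\cdot\nabla e^h$. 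So $\int_{\mathcal{D}\setminus\Omega}\gamma\nabla\tilde v\cdot\nabla e^h$ survives however you regroup. Cauchy--Schwarz only bounds it by $\sqrt{\sup_{\mathcal{D}\setminus\Omega}\gamma}\,\|\nabla\tilde v\|_{L^2(\mathcal{D}\setminus\Omega)}\,\|\Phi e^h\|_\gamma$, which is not one of the three terms in the statement.

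The paper disposes of this term by asserting that $\int_{\mathcal{D}\setminus\Omega}\gamma\nabla(\eta^h/\Phi)\cdot\nabla\hat v=0$, because $\hat v$ is harmonic in the void and the weighting function vanishes on $\partial\Omega$. Your own remark that integration by parts leaves the flux $\int_{\partial\Omega}\gamma\,w^h\,\partial_n\hat v$ is exactly why you cannot borrow that step. The function $w^h=\eta^h/\Phi$ is an arbitrary C-HiDeNN function constrained only on $\partial\mathcal{D}$, so it need not vanish on $\partial\Omega$. Moreover, for nonconstant $\gamma$ in the void, harmonicity of $\hat v$ is not even the relevant orthogonality. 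What is missing is an argument that this void/flux residual is zero, or is controlled by $\sqrt{\sup\gamma}\,\|\hat v-\tilde v\|_{H^1(\mathcal{D}\setminus\overline{\Omega})}$. Without it, your route proves the stated bound plus an extra term $C\sqrt{\sup_{\mathcal{D}\setminus\Omega}\gamma}\,\|\tilde v\|_{H^1(\mathcal{D}\setminus\Omega)}$. That term is harmless under the scaling $\gamma=\mathcal{O}(h^{2p})$ of Corollary~2, but the result is not the estimate claimed.

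A second step also fails as written. Hardy's inequality gives $\|u/\Phi\|_{L^2(\Omega)}\lesssim\|\nabla u\|_{L^2(\Omega)}$ for $u\in H^1_0(\Omega)$. But $\nabla(u/\Phi)=\nabla u/\Phi-u\nabla\Phi/\Phi^2$ costs one more derivative, so $\|\nabla v\|_{L^2(\Omega)}$ is controlled by $\|u\|_{H^2(\Omega)}$, not by $\|u\|_{H^1(\Omega)}$. For instance, near a boundary point in 1D, $\Phi=x$ and $u=x^{5/4}$ give $u\in H^1$ while $(u/\Phi)'=\tfrac14x^{-3/4}\notin L^2$. The paper makes the same replacement of $\|\nabla\hat v\|_{L^2(\Omega)}$ by $\|u\|_{H^1(\Omega)}$ without justification. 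Since $u\in H^{p+1}(\Omega)$ with $p\ge 1$, writing that term with $\|u\|_{H^2(\Omega)}$ is the straightforward repair.
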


In Eq. (\ref{eq:error_estimate}), the first term represents the C-HiDeNN interpolation error. The second and third terms both arise from the stabilization: the second term accounts for the deviation from the original governing equation inside the computational domain $\Omega$, while the third term captures the smoothness issue across the boundary $\partial \Omega$ caused by the harmonic extension outside $\Omega$. 

\begin{corollary}
    Let $\gamma(\mathbf{x}) = 0$ in $\Omega$, the second term in inequality (\ref{eq:error_estimate}) vanishes, the error estimate becomes
    \begin{equation}
        \| u - u^h \|_{\gamma} \;\leq\; C_1 h^{p}\, \|\tilde{v}\|_{H^{p+1}(\mathcal{D})} 
        + C_3 \sqrt{\sup_{\mathbf{x}\in\mathcal{D}\setminus\Omega} \gamma(\mathbf{x})}\, \|\hat{v} - \tilde{v}\|_{H^1(\mathcal{D}\setminus\overline{\Omega})}.
    \end{equation}
\end{corollary}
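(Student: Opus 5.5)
The corollary follows directly from Theorem~\ref{thm:error}, which I take as given; no new error analysis is needed. I will substitute the hypothesis $\gamma(\mathbf{x}) = 0$ for $\mathbf{x}\in\Omega$ into the bound \eqref{eq:error_estimate}. Two checks are required: the theorem's hypotheses still hold, and only the second term is affected.

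\textbf{Step 1: the theorem applies.} The theorem only asks for $\gamma \geq 0$ on $\mathcal{D}$. A function that vanishes on $\Omega$ and is nonnegative (small) on the void $\mathcal{D}\setminus\overline{\Omega}$ satisfies this. The constants $C_1, C_2, C_3$ are independent of $h$. The extensions $\tilde{v}$ and $\hat{v}$ depend only on $v$ and $\Omega$, not on the values of $\gamma$ inside $\Omega$. So \eqref{eq:error_estimate} holds verbatim with the same constants. The norm $\|\cdot\|_\gamma$ keeps the same definition, with its second integral now effectively taken over $\mathcal{D}\setminus\Omega$ only.

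\textbf{Step 2: the second term is zero.} It has the form $C_2 \sup_{\mathbf{x}\in\Omega}\gamma(\mathbf{x}) \cdot S \cdot \|u\|_{H^1(\Omega)}$, where $S$ is the supremum of the ratio $\|\nabla\eta^h/\Phi\|_{L^2(\Omega)}/\|\nabla\eta^h\|_{L^2(\Omega)}$. Since $\gamma\equiv 0$ on $\Omega$, the prefactor $\sup_{\Omega}\gamma$ is $0$.

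The main obstacle is that $S$ may be $+\infty$, because $\Phi\to 0$ at $\partial\Omega$, and then the product $0\cdot\infty$ is undefined. I would resolve this by going back to where this term arises rather than reading it off the final bound. In the proof of Theorem~\ref{thm:error}, the term comes from the consistency error of the stabilization inside $\Omega$. That error is the residual
\[
\int_{\Omega}\gamma\,\nabla w^h\cdot\nabla \hat v\,\mathrm{d}\Omega ,
\]
obtained by inserting the exact solution into \eqref{eq:weak_strong}. The weak form integrates $\gamma$ pointwise, so this residual is identically zero whenever $\gamma|_\Omega=0$, before any ratio bound is applied. The second term therefore vanishes outright, not merely in the limit.

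\textbf{Step 3: the remaining terms.} The first and third terms are unchanged. The third term depends only on $\sup_{\mathcal{D}\setminus\Omega}\gamma$, which is unaffected by the hypothesis. This yields exactly the stated estimate.
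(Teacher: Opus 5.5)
Your proposal is correct and matches the paper, which obtains the corollary simply by setting $\sup_{\mathbf{x}\in\Omega}\gamma(\mathbf{x})=0$ in the second term of \eqref{eq:error_estimate}. The first and third terms, which involve only $h$, $\tilde{v}$, $\hat{v}$ and $\sup_{\mathcal{D}\setminus\Omega}\gamma$, are left unchanged. Your extra step of returning to the consistency residual $\int_{\Omega}\gamma\,\nabla(\eta^h/\Phi)\cdot\nabla\hat{v}\,\mathrm{d}\Omega$, which vanishes identically when $\gamma|_{\Omega}=0$, is a sensible refinement that removes the possible $0\cdot\infty$ ambiguity the paper does not address; note that it relies on the same step in the proof of Theorem~\ref{thm:error} that discards the void part of the residual.
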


Corollary~1 shows that setting $\gamma = 0$ inside $\Omega$ eliminates the second term of~\eqref{eq:error_estimate} entirely, leaving only the C-HiDeNN interpolation error (first term) and the extension mismatch from the void stabilization (third term). This is the most natural choice for the exact Dirichlet formulation: since the BFF already enforces the boundary condition strongly, there is no need for interior stabilization to restore coercivity, and the error is dictated solely by the approximation properties of the trial space and the regularity gap across $\partial\Omega$.

\begin{corollary}
    Let $\gamma(\mathbf{x}) \sim O(h^{2p})$, the error estimate becomes
    \begin{equation}
        \| u - u^h \|_{\gamma} \;\leq\; C_1 h^{p}\, \|\tilde{v}\|_{H^{p+1}(\mathcal{D})} + C_3 h^p \, \|\hat{v} - \tilde{v}\|_{H^1(\mathcal{D}\setminus\overline{\Omega})}.
    \end{equation}
\end{corollary}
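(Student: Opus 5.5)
Corollary~2 follows from Theorem~\ref{thm:error} by substituting the scaling $\gamma(\mathbf{x})\le c_\gamma h^{2p}$ into the three terms of \eqref{eq:error_estimate}, with $c_\gamma$ independent of $h$. The first term, $C_1 h^p\|\tilde v\|_{H^{p+1}(\mathcal{D})}$, does not involve $\gamma$ and stays as it is. For the third term, $\sqrt{\sup_{\mathcal{D}\setminus\Omega}\gamma}\le \sqrt{c_\gamma}\,h^{p}$. After absorbing $\sqrt{c_\gamma}$ into $C_3$, this term becomes $C_3h^p\|\hat v-\tilde v\|_{H^1(\mathcal{D}\setminus\overline{\Omega})}$, which is exactly the second term of the corollary.

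The real work is the second (interior) term of \eqref{eq:error_estimate}. There $\sup_\Omega\gamma\le c_\gamma h^{2p}$, and this factor multiplies the ratio
\[
R_h=\sup_{\eta^h\in\mathcal{V}^h\setminus\{0\}}\|\nabla\eta^h/\Phi\|_{L^2(\Omega)}/\|\nabla\eta^h\|_{L^2(\Omega)} .
\]
Since $\Phi$ vanishes linearly on $\partial\Omega$ (with $|\nabla\Phi|\ge a$ by the BFF constraint), $R_h$ is not bounded uniformly in $h$. I would estimate it on the discrete space instead.

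1. Cover a boundary layer of width $\sim h$ by background cells.
2. On cells where $\Phi\gtrsim h$, the bound $1/\Phi\lesssim h^{-1}$ is immediate.
3. On the remaining cut cells, I would use an inverse inequality for the C-HiDeNN polynomials together with $\Phi(\mathbf{x})\ge a\,\mathrm{dist}(\mathbf{x},\partial\Omega)/2$.

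The aim of these steps is a bound $R_h\le C h^{-s}$ with $s<p$; ideally $s=1$ up to a possible logarithmic factor. If the weighted integral over cut cells diverges in this form, I would instead use the fact that the C-HiDeNN approximants lie in a finite-dimensional space on a fixed background grid. Then the singular weight is only weakly singular after accounting for the $\mathcal{O}(h^{d})$ measure of the cells where $\Phi\le h^{2}$, and one can pick a boundary-layer cutoff that balances the two regions.

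Given $R_h\lesssim h^{-s}$, the interior term is $\lesssim h^{2p-s}\|u\|_{H^1(\Omega)}\le C h^{p}\|u\|_{H^1(\Omega)}$. Next, $\|u\|_{H^1(\Omega)}=\|\Phi v\|_{H^1(\Omega)}$ is bounded by $C\|\tilde v\|_{H^{p+1}(\mathcal{D})}$, using the smoothness of $\Phi$ and $\tilde v|_\Omega=v$. The interior term is therefore absorbed into $C_1h^p\|\tilde v\|_{H^{p+1}(\mathcal{D})}$ after enlarging $C_1$, which gives the stated two-term bound.

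The main obstacle is controlling $R_h$. Everything else is direct substitution. If the proof of $R_h\lesssim h^{-s}$ with $s\le p$ fails, the fallback is to make the scaling $\gamma\sim O(h^{2p})$ apply only in the void and to set $\gamma=0$ in $\Omega$, as in Corollary~1. The interior term then vanishes outright, and the same substitution in the third term finishes the argument.
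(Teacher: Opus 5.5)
Your handling of the first and third terms is exactly the substitution the paper intends. The paper offers nothing beyond it and silently drops the interior (second) term of \eqref{eq:error_estimate}, so you are right that this term needs an argument.

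\textbf{The gap.} The argument you propose, a bound $R_h\le Ch^{-s}$, is false.

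\emph{Reading of the theorem as written.} With $\eta^h=\Phi w^h$, one has $\nabla\eta^h/\Phi=\nabla w^h+w^h\nabla\Phi/\Phi$. Near $\partial\Omega$, $|\nabla\Phi|\ge a$ and $\Phi\lesssim\mathrm{dist}(\mathbf{x},\partial\Omega)$, so the integrand behaves like $|w^h|^2\,\mathrm{dist}^{-2}$. Its normal integral $\int_0 t^{-2}\,\mathrm{d}t$ diverges whenever $w^h\not\equiv 0$ on $\partial\Omega$. Hence $R_h=+\infty$. Your lower bound $\Phi\ge a\,\mathrm{dist}/2$ only confirms this, and no cell-measure or finite-dimensionality argument makes the weight ``weakly singular''.

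\emph{Reading used in the appendix.} There the numerator is $\|\nabla(\eta^h/\Phi)\|_{L^2(\Omega)}=\|\nabla w^h\|_{L^2(\Omega)}$. Now $R_h$ is finite on each mesh, but it is controlled by the worst cut, not by $h$. Take $w^h$ to be a single basis function whose support meets $\Omega$ only in a sliver of width $\epsilon h$. In the 1D linear model, $\|\nabla w^h\|_{L^2(\Omega)}^2=\epsilon/h$ while $\|\nabla(\Phi w^h)\|_{L^2(\Omega)}^2\approx a^2\epsilon^3 h/3$, so $R_h\gtrsim(a\epsilon h)^{-1}$. The fraction $\epsilon$ depends on the grid--geometry alignment and can be arbitrarily small. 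Inverse inequalities do not help, because what you need is a lower bound on $\|\nabla(\Phi w^h)\|$ over the cut region.

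Your fallback, $\gamma=0$ in $\Omega$, is sound. However, it proves the corollary only under an extra hypothesis, namely Corollary~1 combined with the void scaling.

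\textbf{The missing idea.} Stop treating the second term of Theorem~\ref{thm:error} as a black box. Return to the consistency residual in the Strang argument,
$A_\gamma(\eta^h,u)-L(\eta^h,b)=\int_{\mathcal{D}}\gamma\,\nabla(\eta^h/\Phi)\cdot\nabla\hat v\,\mathrm{d}\mathcal{D}$,
and divide by the $\gamma$-part of $\|\eta^h\|_\gamma$ rather than by $\|\nabla\eta^h\|_{L^2(\Omega)}$. A $\gamma$-weighted Cauchy--Schwarz inequality gives
\[
|A_\gamma(\eta^h,u)-L(\eta^h,b)|\le\Bigl(\int_{\mathcal{D}}\gamma\,|\nabla(\eta^h/\Phi)|^2\,\mathrm{d}\mathcal{D}\Bigr)^{1/2}\Bigl(\int_{\mathcal{D}}\gamma\,|\nabla\hat v|^2\,\mathrm{d}\mathcal{D}\Bigr)^{1/2}\le\|\eta^h\|_\gamma\sqrt{\sup_{\mathcal{D}}\gamma}\;\|\nabla\hat v\|_{L^2(\mathcal{D})}.
\]
With $\gamma\le c\,h^{2p}$, the consistency error is therefore at most $\sqrt{c}\,h^p\|\nabla\hat v\|_{L^2(\mathcal{D})}$. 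Since $\hat v=\tilde v$ on $\Omega$,
$\|\nabla\hat v\|_{L^2(\mathcal{D})}\le\|\tilde v\|_{H^{p+1}(\mathcal{D})}+\|\hat v-\tilde v\|_{H^1(\mathcal{D}\setminus\overline{\Omega})}$.
This is absorbed into the two terms of the corollary after enlarging $C_1$ and $C_3$.

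This route needs no control of $R_h$ and no cut-cell assumption. It also does not need the void part of the residual to vanish. The appendix asserts that vanishing, but it would require $\eta^h/\Phi=0$ on $\partial\Omega$.
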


Corollary~2 provides a practical guideline for choosing the void stabilization. When $\gamma$ scales as $\mathcal{O}(h^{2p})$, the third error term, which arises from the smoothness issue of the harmonic extension, converges at the same rate $\mathcal{O}(h^p)$ as the interpolation term. Balancing the two sources of error in this way recovers the overall $h$-convergence rate of the underlying C-HiDeNN approximation and prevents the void stabilization from becoming the accuracy bottleneck. The requirement that the fictitious-domain stabilization parameter be mesh-dependent to preserve high-order convergence is not specific to the present formulation; it is a general constraint across immersed and fictitious-domain methods. The $\phi$-FEM framework~\cite{duprez2020phi,duprez2023phifem}, though enforcing Dirichlet conditions strongly through trial-function multiplication, requires a boundary stabilization whose magnitude must be adapted to the mesh size to control the normal-flux mismatch across the immersed interface. Similarly, in CutFEM~\cite{burman2010,burman2012}, the ghost-penalty form carries an intrinsic $h$-dependent scaling as well. In the finite cell method~\cite{parvizian2007fcm,duster2008fcm,schillinger2015fcm}, the void stiffness parameter $\alpha$ is chosen sufficiently small (between $10^{-5}$ and $10^{-10}$) for high-order basis functions. In practice, setting $\gamma$ to a small constant ($10^{-10}$ in our numerical experiments, Section~\ref{sec:numerical}) satisfies this scaling for all mesh sizes of interest, since $\gamma^{1/2} \ll h^p$ for moderate $h$.

\subsection{Regularity of the body-fitted function}
\label{sec:exact:regularity}

Theorem~\ref{thm:error} assumes the BFF $\Phi$ to be sufficiently smooth on $\mathcal{D}$, so that the geometric coefficients $\Phi^2$ and $\Phi\,\nabla^2\Phi$ appearing in the weak form~\eqref{eq:weak_strong} are well-defined and the interpolation estimates used in the proof are valid. In practice, however, the signed distance function from which $\Phi$ is constructed is typically only $C^0$ continuous, as the SDF is continuous everywhere but its gradient is discontinuous across the medial axis (the skeleton of $\Omega$), where the nearest point on $\partial\Omega$ jumps between distinct boundary segments \cite{sethian1999level}.

The exponential map defined in Eq.~\eqref{eq:Phi_def} serves as a regularization of the original SDF as well. To see this, compute the gradient of $\Phi_\beta$:
\begin{equation}
    \nabla \Phi_\beta \;=\; -\frac{1}{\beta}\,\mathrm{e}^{\mathrm{SDF}/\beta}\,\nabla\mathrm{SDF}
    \;=\; \frac{1}{\beta}\,(1 - \Phi_\beta)\,\nabla\mathrm{SDF},
    \label{eq:grad_Phi_smooth}
\end{equation}
where we have used $|\nabla\mathrm{SDF}| = 1$ almost everywhere. At any point where $|\mathrm{SDF}| \gg \beta$, the factor $1 - \Phi_\beta = \mathrm{e}^{\mathrm{SDF}/\beta}$ is exponentially small, so $\nabla\Phi_\beta \approx 0$ regardless of the direction of $\nabla\mathrm{SDF}$. Consequently, if the parameter $\beta$ is chosen small enough that the medial-axis locus lies deep in the interior where $|\mathrm{SDF}| \gg \beta$, then $\Phi_\beta \approx 1$ and $\nabla\Phi_\beta \approx 0$ at those points, and the $C^0$ irregularity of the SDF is \emph{exponentially attenuated} in $\Phi_\beta$ and its derivatives. A practical guideline is to set $\beta \ll d_{\mathrm{med}}$, where $d_{\mathrm{med}}$ is the minimum distance from $\partial\Omega$ to the medial axis, so that the kinks of the SDF are effectively invisible to the body-fitted formulation. This regularization property is a key practical advantage of the exponential construction over, e.g., a linear ramp $\Phi = \max(-\mathrm{SDF}, 0)$, for which $\nabla\Phi$ inherits the full derivative discontinuity of the SDF.

\section{Immersed C-HiDeNN-TD using exact Dirichlet immersed form}
\label{sec:itd:exact}

For large-scale problems, the computational cost can be reduced by model order reduction. The C-HiDeNN-Tensor Decomposition (TD) method, that leverages a special form of tensor decomposition called canonical polyadic decomposition, can be applied directly to the Cartesian background $\mathcal{D}$.

The discrete trial and test functions follow the same TD structure as in the C-HiDeNN-TD framework \cite{lu2023chidenn,guo2024taps}:
\begin{equation}
    v^h(x,y,z) \;=\; \sum_{m=1}^{M} v_x^{(m)}(x)\,v_y^{(m)}(y)\,v_z^{(m)}(z),
    \qquad
    w^h \;=\; \delta v^h,
    \label{eq:cp_v}
\end{equation}
with the boundary lift $g^h$ represented as a rank-$M_g$ TD approximation
\begin{equation}
    g^h(x,y,z) \;=\; \sum_{m=1}^{M_g} g_x^{(m)}(x)\,g_y^{(m)}(y)\,g_z^{(m)}(z),
    \label{eq:cp_g}
\end{equation}
where each factor $v_\xi^{(m)}, g_\xi^{(m)}$ ($\xi \in \{x,y,z\}$) is represented using 1D C-HiDeNN shape functions.

To improve the efficiency of multi-dimensional integrals in the weak form, the key coefficients $\Phi$, $\Phi^2$, and $\Phi\,\nabla^2\Phi$ are also approximated by Tucker forms:
\begin{eqnarray}
	&& \Phi \approx \hat{\Phi}^{(1)} = \sum_{p=1}^{P_1} \sum_{q=1}^{Q_1} \sum_{k=1}^{K_1} \mathcal{H}^{(1)}_{pqk} \, \phi_{x}^{(p)}(x) \, \phi_{y}^{(q)}(y) \, \phi_{z}^{(k)}(z), \\
	&& \Phi^2 \approx \hat{\Phi}^{(2)} = \sum_{p=1}^{P_2} \sum_{q=1}^{Q_2} \sum_{k=1}^{K_2} \mathcal{H}^{(2)}_{pqk} \, \varphi_{x}^{(p)}(x) \, \varphi_{y}^{(q)}(y) \, \varphi_{z}^{(k)}(z), \\
	&& \Phi\,\nabla^2\Phi \approx \hat{\Phi}^{(3)} = \sum_{p=1}^{P_3} \sum_{q=1}^{Q_3} \sum_{k=1}^{K_3} \mathcal{H}^{(3)}_{pqk} \, \theta_{x}^{(p)}(x) \, \theta_{y}^{(q)}(y) \, \theta_{z}^{(k)}(z).
\label{tucker_phi}
\end{eqnarray}
It is important to note that $\hat{\Phi}^{(1)}$, $\hat{\Phi}^{(2)}$ and $\hat{\Phi}^{(3)}$ are decomposed independently using separate Tucker factorizations with distinct factor matrices and core tensors. Consequently, the algebraic relations $\hat{\Phi}^{(2)} = (\hat{\Phi}^{(1)})^2$ and $\hat{\Phi}^{(3)} = \hat{\Phi}^{(1)}\,\nabla^2\hat{\Phi}^{(1)}$ that hold for the exact geometric fields are generally not preserved in the Tucker-compressed representation. This deviation introduces an additional geometric consistency error. Enforcing these transformation relations by deriving $\hat{\Phi}^{(2)}$ and $\hat{\Phi}^{(3)}$ directly from $\hat{\Phi}^{(1)}$ would eliminate this source of error but may increase the tensor rank required to represent the derived fields.

Substituting \eqref{eq:cp_v} into \eqref{eq:weak_strong} and contracting against the Tucker cores \eqref{tucker_phi} produces three contributions to the assembled 1D operator $\mathbf{K}^{[\xi]}$ in each PGD direction $\xi \in \{x,y,z\}$: a stiffness term from $\Phi^2 + \gamma$, a mass-like term from $\Phi\,\nabla^2 \Phi$, and a coupling term contributing to the right-hand side from $\Phi$ and $g^h$.

To circumvent the curse of dimensionality, the trial space is enriched mode by mode using PGD. At enrichment step $M$, the solution is split into a known part (the first $M-1$ modes) and an unknown rank-1 increment,
\begin{equation}
    v^h(x,y,z) \;=\;
    \underbrace{\sum_{m=1}^{M-1} v_x^{(m)}(x)\,v_y^{(m)}(y)\,v_z^{(m)}(z)}_{v^{<}(\mathbf{x})}
    \;+\; \bar{v}_x(x)\,\bar{v}_y(y)\,\bar{v}_z(z),
    \label{eq:pgd_split_exact}
\end{equation}
and the three 1D factors $\bar{v}_x, \bar{v}_y, \bar{v}_z$ are computed by an alternating fixed-point iteration. Restricting the variation to the $x$ component, $\delta v^h = \delta\bar{v}_x(x)\,\bar{v}_y(y)\,\bar{v}_z(z)$, and substituting into \eqref{eq:weak_strong} yields a 1D linear system
\begin{equation}
    \mathbf{K}^{[x]} \mathbf{v}_x \;=\; \mathbf{q}^{[x]},
    \label{eq:pgd_1d_system_exact}
\end{equation}
in which the operator $\mathbf{K}^{[x]}$ and right-hand side $\mathbf{q}^{[x]}$ are assembled by contracting the Tucker cores \eqref{tucker_phi} against scalars obtained from 1D quadrature in the fixed directions $y$ and $z$. The procedure is repeated cyclically over $x \to y \to z$ until the relative change in the rank-1 mode falls below a prescribed tolerance; the new mode is then appended to $v^{<}$ and the next enrichment step is initiated.


\subsection{Error estimate for immersed TD}

In the immersed TD formulation, the BFF $\Phi$ and its derived fields $\Phi^2$ and $\Phi\,\nabla^2\Phi$ are approximated respectively by independent Tucker decompositions $\hat{\Phi}^{(1)}$, $\hat{\Phi}^{(2)}$ and $\hat{\Phi}^{(3)}$ as defined in Eq.~\eqref{tucker_phi}. This geometric compression introduces additional deviations whose impact is quantified by the following error estimate. Define the Tucker perturbation fields
\begin{equation} \label{eq:delta_def}
    \delta_1 = \hat{\Phi}^{(1)} - \Phi,\qquad
    \delta_2 = \hat{\Phi}^{(2)} - \Phi^2,\qquad
    \delta_3 = \hat{\Phi}^{(3)} - \Phi\,\nabla^2\Phi.
\end{equation}

\begin{theorem}\label{thm:error_tucker}
Let $\delta_1, \delta_2, \delta_3$ be the Tucker perturbation fields defined in~\eqref{eq:delta_def}. Under the same assumptions as Theorem~\ref{thm:error}, the discrete C-HiDeNN solution $u^h$ obtained from Eq.~\eqref{eq:weak_strong} with the Tucker-compressed coefficients satisfies
\begin{equation} \label{eq:error_estimate_tucker}
\begin{aligned}
    \| u - u^h \|_{\gamma} \;\leq\; &\; C_1 h^{p}\, \|\tilde{v}\|_{H^{p+1}(\mathcal{D})} \\
    &+ C_2 \Bigl(\sup_{\mathbf{x}\in\Omega} \gamma(\mathbf{x}) + \|\delta_2\|_{L^\infty(\Omega)}\Bigr)
       \sup_{\eta^h \in \mathcal{V}^h\setminus\{0\}} \dfrac{\|\nabla \eta^h/\Phi\|_{L^2(\Omega)}}{\|\nabla \eta^h\|_{L^2(\Omega)}} \cdot \|u\|_{H^1(\Omega)} \\
    &+ C_3 \sqrt{\sup_{\mathbf{x}\in\mathcal{D}\setminus\Omega} \gamma(\mathbf{x}) + \|\delta_2\|_{L^\infty(\mathcal{D}\setminus\Omega)} + \|\delta_3\|_{L^\infty(\mathcal{D}\setminus\Omega)}}\; \|\hat{v} - \tilde{v}\|_{H^1(\mathcal{D}\setminus\overline{\Omega})} \\
    &+ C_4 \|\delta_3\|_{L^\infty(\mathcal{D})} \sup_{\eta^h \in \mathcal{V}^h\setminus\{0\}} \dfrac{\|\eta^h/\Phi\|_{L^2(\mathcal{D})}}{\|\eta^h\|_\gamma} \cdot \|\hat{v}\|_{L^2(\mathcal{D})} \\
    &+ C_5 \|\delta_1\|_{L^\infty(\mathcal{D})} \sup_{\eta^h \in \mathcal{V}^h\setminus\{0\}} \dfrac{\|\eta^h/\Phi\|_{L^2(\mathcal{D})}}{\|\eta^h\|_\gamma} \cdot \|\nabla^2 g^h + b\|_{L^2(\mathcal{D})},
\end{aligned}
\end{equation}
where $C_1, C_2, C_3, C_4, C_5 > 0$ are constants independent of $h$ and of the Tucker rank.
\end{theorem}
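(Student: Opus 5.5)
We treat Theorem~\ref{thm:error_tucker} as a Strang-type perturbation of Theorem~\ref{thm:error}. Its proof is taken as given, and its three terms reappear unchanged apart from the modified coefficients. Write the exact-coefficient bilinear form of Eq.~\eqref{eq:weak_strong} as
\[
a_\gamma(v,w)=\int_{\mathcal D}(\Phi^2+\gamma)\nabla w\cdot\nabla v-\int_{\mathcal D}\Phi\nabla^2\Phi\,wv,
\]
with right-hand side $\ell(w)=\int_{\mathcal D}\Phi w(\nabla^2 g^h+b)$. Let $\hat a_\gamma$ and $\hat\ell$ be the corresponding forms with $\Phi^2,\Phi\nabla^2\Phi,\Phi$ replaced by $\hat\Phi^{(2)},\hat\Phi^{(3)},\hat\Phi^{(1)}$. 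Then
\[
\hat a_\gamma=a_\gamma+\int\delta_2\nabla w\cdot\nabla v-\int\delta_3 wv,\qquad \hat\ell=\ell+\int\delta_1 w(\nabla^2g^h+b).
\]

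The plan is to work, as in Theorem~\ref{thm:error}, in the variable $\eta=\Phi(\cdot)$ and the norm $\|\cdot\|_\gamma$.
\begin{enumerate}
\item Fix an interpolant $\Pi_h\tilde v$ and split $u-u^h=\Phi(\hat v-\Pi_h\tilde v)+\Phi(\Pi_h\tilde v-v^h)$.
\item The first piece is handled exactly as in Theorem~\ref{thm:error}. It gives the $C_1h^p$ term and the harmonic/Stein mismatch $\hat v-\tilde v$ in the void.
\item For the discrete piece $\eta^h=\Phi(\Pi_h\tilde v-v^h)$, use discrete coercivity of $\hat a_\gamma$ in $\|\cdot\|_\gamma$, which we assume holds for sufficiently small $\|\delta_i\|_{L^\infty}$. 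It gives
\[
\|\eta^h\|_\gamma^2\lesssim \hat a_\gamma(\Pi_h\tilde v-v^h,\eta^h/\Phi)=\hat a_\gamma(\Pi_h\tilde v-\hat v,\cdot)+\bigl[\hat a_\gamma(\hat v,\cdot)-\hat\ell(\cdot)\bigr].
\]
\item Expand the consistency bracket as $[a_\gamma(\hat v,\cdot)-\ell]+\int\delta_2\nabla\hat v\cdot\nabla(\cdot)-\int\delta_3\hat v(\cdot)-\int\delta_1(\cdot)(\nabla^2g^h+b)$.
\end{enumerate}

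The terms are then bounded one by one.
\begin{itemize}
\item The exact-coefficient consistency error $a_\gamma(\hat v,\cdot)-\ell$ is what produced the $\gamma$ terms in Theorem~\ref{thm:error}. There, $\gamma$ in $\Omega$ gives the term with the ratio $\|\nabla\eta^h/\Phi\|/\|\nabla\eta^h\|$, and $\gamma$ in the void gives the term with $\hat v-\tilde v$.
\item The $\delta_2$ term acts like an extra stiffness coefficient. In $\Omega$ it is bounded exactly as the interior $\gamma$ term, using Cauchy--Schwarz, $\|\nabla\hat v\|\lesssim\|u\|_{H^1(\Omega)}$, and the same sup ratio. This is why $\|\delta_2\|_{L^\infty(\Omega)}$ adds to $\sup_\Omega\gamma$. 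In the void it is absorbed into the weighted void stiffness.
\item The $\delta_3$ term in the void is treated like $\delta_2$. This is where $\|\delta_3\|_{L^\infty(\mathcal D\setminus\Omega)}$ enters the square root multiplying $\|\hat v-\tilde v\|$: one perturbs with $\hat v-\tilde v$ there and uses Cauchy--Schwarz with the effective weight $\gamma+\delta_2+\delta_3$.
\item Globally, the $\delta_3$ term is bounded directly:
\[
\Bigl|\int\delta_3\hat v\,\eta^h/\Phi\Bigr|\le\|\delta_3\|_\infty\|\hat v\|_{L^2}\|\eta^h/\Phi\|_{L^2},
\]
which gives the $C_4$ ratio after dividing by $\|\eta^h\|_\gamma$.
\item The $\delta_1$ term likewise gives $C_5$ with $\|\nabla^2g^h+b\|_{L^2}$.
\item The residual $\hat a_\gamma(\Pi_h\tilde v-\hat v,\cdot)$ is handled by continuity. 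Its $\delta$ contributions are higher order times $h^p$ and are absorbed into $C_1$.
\end{itemize}
Finally, divide by $\|\eta^h\|_\gamma$ and apply the triangle inequality to conclude.

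The main obstacle is step 3: stability of the perturbed form. The terms $\int\delta_2|\nabla w|^2$ and $-\int\delta_3w^2$ may destroy coercivity, particularly near $\partial\Omega$ where $\Phi^2$ is small, or in the void if $\delta_2<-\gamma$. I would first try to assume $\|\delta_2\|_\infty,\|\delta_3\|_\infty$ small relative to the coercivity constant and to $\inf\gamma$ in the void. This perturbation assumption keeps $C_i$ independent of rank. Failing that, I would track the perturbations entirely on the right-hand side in the Strang manner. This avoids the stability issue only if discrete stability of $\hat a_\gamma$ is postulated as part of "the same assumptions".
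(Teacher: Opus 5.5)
Your plan is essentially the paper's proof. The paper applies Strang's first lemma directly in the Tucker energy norm $\|w\|_{\hat\gamma}=\sqrt{\hat A_\gamma(w,w)}$ and converts back through an assumed equivalence $c_0\|w\|_\gamma\le\|w\|_{\hat\gamma}\le c_1\|w\|_\gamma$ for small $\|\delta_2\|_{L^\infty}$, $\|\delta_3\|_{L^\infty}$; this plays the role of your discrete-coercivity assumption, which you state more precisely. It then bounds the same four consistency pieces by Cauchy--Schwarz. The main bookkeeping difference is that the paper gets the $\delta_2,\delta_3$ entries under the square root from the approximation term $\|\Phi(\hat v-\tilde v)\|_{\hat\gamma}$ on $\mathcal{D}\setminus\Omega$, where $\hat\Phi^{(2)}=\delta_2$ and $\hat\Phi^{(3)}=\delta_3$. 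It also asserts that the void part of the $\delta_2$ consistency integral vanishes by the same harmonic-extension argument used for $\gamma$ in Theorem~\ref{thm:error}, rather than absorbing it.
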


Comparing Theorem~\ref{thm:error_tucker} with Theorem~\ref{thm:error}, the perturbation $\delta_2$ enters the estimate in exactly the same manner as the void stabilization parameter $\gamma$ (second and third terms). This reveals a fundamental structural analogy: the Tucker compression error in $\hat{\Phi}^{(2)}$ acts as an \emph{effective additional stabilization} whose $L^\infty$-norm combines additively with $\gamma$. Consequently, the accuracy of the Tucker-compressed formulation is controlled by the \emph{total} stabilization magnitude $\gamma + \delta_2$, and the Tucker rank must be chosen such that $\|\delta_2\|_{L^\infty}$ remains commensurate with $\gamma$ to avoid degrading the convergence rate. The $\delta_3$ perturbation, arising from the independent compression of $\Phi\,\nabla^2\Phi$, contributes a mass-like error term (fourth term) that scales with $\|\delta_3\|_{L^\infty}$. The $\delta_1$ perturbation propagates directly through the right-hand side (fifth term). A complete proof is given in Appendix~\ref{appenx:Proof_tucker}.

\begin{remark}[Generality of the geometric error terms]
The perturbation fields $\delta_1, \delta_2, \delta_3$ in Theorem~\ref{thm:error_tucker} are not restricted to Tucker decomposition errors. They accommodate any source of inaccuracy in the BFF and its derived geometric coefficients, making the estimate applicable to a broad class of geometric approximation strategies:
\begin{itemize}
    \item \textbf{Voxel-based SDF discretization.} When the signed distance function is computed on a finite-resolution voxel grid, the resulting $\Phi$ constructed via Eq.~\eqref{eq:Phi_def} inherits a discretization error proportional to the voxel spacing. This error propagates into the geometric coefficients and is captured by the $\delta_1$ (fifth) and $\delta_2$ (second/third) terms.
    \item \textbf{Interpolation-based approximations.} When $\Phi$ is approximated by interpolation from a set of sample points (e.g., using radial basis functions, neural-network implicit representations, or spline fitting), the interpolation error enters the estimate through the same mechanisms.
    \item \textbf{SDF regularization or smoothing.} Any post-processing of the SDF prior to constructing $\Phi$ (e.g., Gaussian filtering to suppress noise in image-derived geometries) introduces perturbations to $\Phi$, $\Phi^2$, and $\Phi\,\nabla^2\Phi$ whose effects are likewise bounded by the three perturbation channels.
\end{itemize}
In each case, provided the approximation errors $\|\delta_1\|_{L^\infty(\mathcal{D})}$, $\|\delta_2\|_{L^\infty(\mathcal{D})}$, and $\|\delta_3\|_{L^\infty(\mathcal{D})}$ can be estimated, either analytically or numerically, Theorem~\ref{thm:error_tucker} yields a quantitative bound on the resulting solution error.
\end{remark}

\section{Numerical examples}
\label{sec:numerical}

In immersed methods, the diffuse interface (the narrow band where $0 < \Phi < 1$) requires higher-order quadrature to resolve the sharp gradients of the BFF and its derivatives. Applying uniformly dense Gauss quadrature over all elements is unnecessarily expensive: elements deep inside $\Omega$ (where $\Phi \approx 1$ and its derivatives are smooth) and elements in the void (where $\Phi = 0$) do not benefit from the extra integration points.

We adopt a two-level adaptive Gauss quadrature strategy. A pre-classification step evaluates the nodal values of $\Phi$ on each element and labels elements intersecting the transition zone (those with at least one node satisfying $\varepsilon_{\text{ext}} < \Phi < \varepsilon_{\text{int}}$) as boundary elements; all others are non-boundary elements. The implementation follows Algorithm~\ref{alg:classify}.

\begin{algorithm}[htbp]
\caption{Element classification for adaptive Gauss quadrature}
\label{alg:classify}
\begin{algorithmic}[1]
\State \textbf{Input:} nodal coordinates, mesh topology, $\Phi(\mathbf{x})$, thresholds $\varepsilon_{\text{ext}}$, $\varepsilon_{\text{int}}$
\State Evaluate $\Phi$ at all mesh nodes: $\Phi_i \leftarrow \Phi(\mathbf{x}_i)$
\For{each element $e$ with nodes $\mathcal{N}_e$}
    \State $\Phi_{\min}^{(e)} \leftarrow \min_{i \in \mathcal{N}_e} \Phi_i$, $\quad \Phi_{\max}^{(e)} \leftarrow \max_{i \in \mathcal{N}_e} \Phi_i$
    \If{$\Phi_{\max}^{(e)} > \varepsilon_{\text{ext}}$ \textbf{and} $\Phi_{\min}^{(e)} < \varepsilon_{\text{int}}$}
        \State Mark $e$ as \textbf{boundary}
    \Else
        \State Mark $e$ as \textbf{non-boundary}
    \EndIf
\EndFor
\State \textbf{Output:} boundary element set $\mathcal{E}_B$, non-boundary element set $\mathcal{E}_N$
\end{algorithmic}
\end{algorithm}

In the assembly of the stiffness matrix and force vector, elements in $\mathcal{E}_B$ are integrated with $n_g^{\text{bnd}}$ Gauss points per direction ($20 \times 20$ in the present computations), while elements in $\mathcal{E}_N$ use a coarser $n_g^{\text{int}}$ rule ($6 \times 6$). The error computation employs a slightly enriched rule ($n_g^{\text{bnd}}+2$, $n_g^{\text{int}}+2$) to avoid quadrature-dominated error estimates. For the multiply-connected void domain, the boundary element set $\mathcal{E}_B$ typically accounts for approximately $10$--$15\%$ of the total elements; the adaptive strategy therefore reduces the quadrature cost by roughly a factor of $5$--$7$ compared with a uniform $20 \times 20$ rule, with no measurable loss of accuracy. Table~\ref{tab:gauss_adaptive} summarizes the quadrature configuration.

\begin{table}[htbp]
\centering
\caption{Adaptive Gauss quadrature configuration}
\label{tab:gauss_adaptive}
\begin{tabular}{lcc}
\hline
\textbf{Element type} & \textbf{Criterion} & \textbf{Gauss points}  \\
\hline
Boundary  & $\Phi_{\max} > \varepsilon_{\text{ext}} \;\text{and}\; \Phi_{\min} < \varepsilon_{\text{int}}$ & $20 \times 20$  \\
Non-boundary & Otherwise & $6 \times 6$ \\
\hline
\end{tabular}
\end{table}

\subsection{2D circular / annular computational domain}

The exact Dirichlet immersed formulation is evaluated on two 2D Poisson benchmark problems using the BFF $\Phi_\beta$ defined in Eq.~\eqref{eq:Phi_def}, embedded in the square background domain $\mathcal{D} = [-1,1] \times [-1,1]$. The void stabilization is set to $\gamma(\mathbf{x})=10^{-10}$ in $\mathcal{D}$, which is sufficiently small to be considered negligible within the range of mesh resolutions of interest and does not affect the observed convergence rates. The C-HiDeNN radial basis expansion parameter is $r_a=50$ throughout.

\subsubsection{2D circular computational domain}
\label{sec:numerical:circle}

The first benchmark is the Poisson equation on a circular domain $\Omega = \{ (x,y) \mid \sqrt{x^2+y^2} < R\}$ of radius $R=0.7$, subject to homogeneous Dirichlet boundary conditions. The signed distance function for this geometry admits the closed form
\begin{equation}
    \mathrm{SDF}(x,y) = \sqrt{x^2 + y^2} - R,
    \label{eq:sdf_circle}
\end{equation}
with the sign convention illustrated in Fig.~\ref{fig:sdf} (negative inside $\Omega$, positive outside). The BFF $\Phi_\beta$ is then constructed from Eq.~\eqref{eq:Phi_def}. Figure~\ref{fig:circle_geom} shows the SDF and the resulting BFF $\Phi_\beta$ for the circular domain.

\begin{figure}[htbp]
\centering
\includegraphics[width=0.48\textwidth]{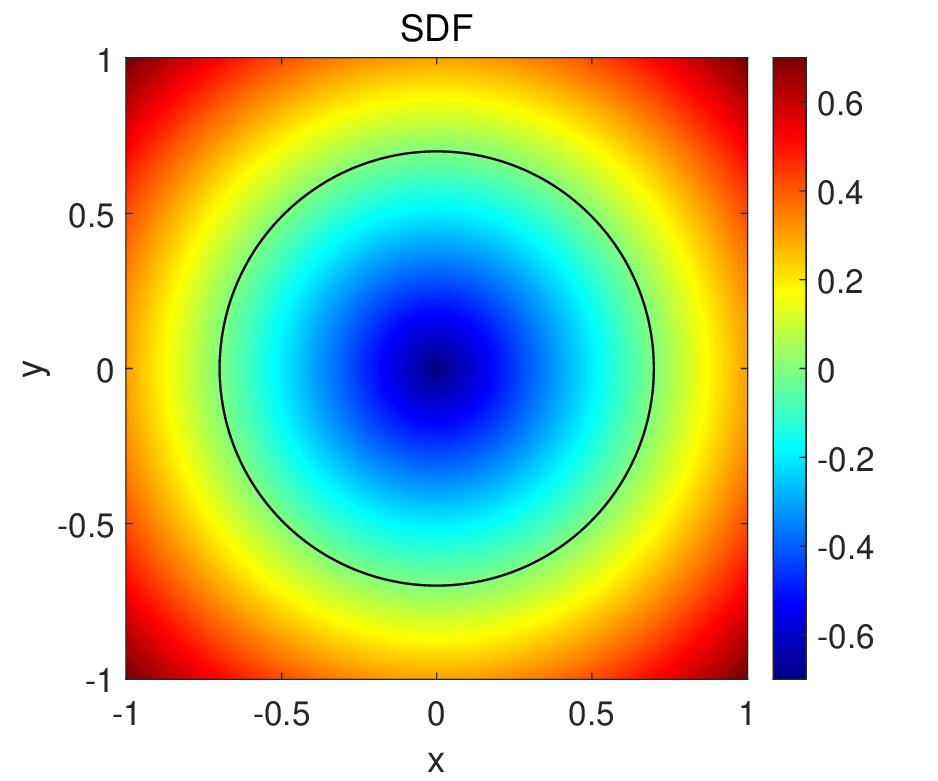}
\hfill
\includegraphics[width=0.48\textwidth]{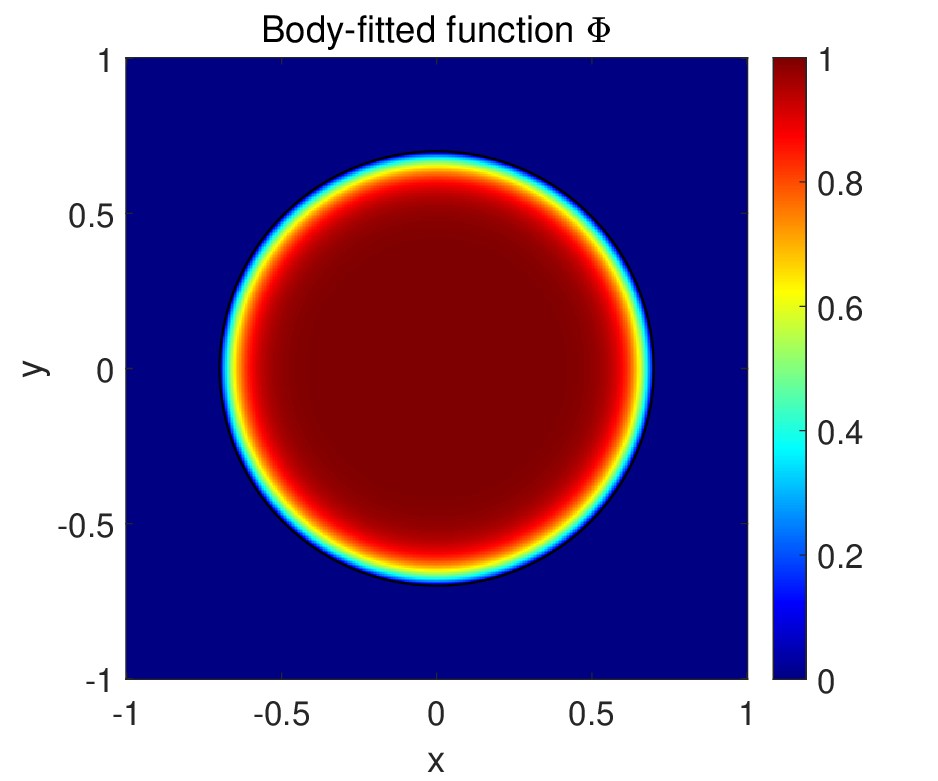}
\caption{SDF and BFF $\Phi_\beta$ for the 2D circular domain ($R=0.7$). The black solid line marks the domain boundary $\partial\Omega$.}
\label{fig:circle_geom}
\end{figure}

The exact solution is chosen as
\begin{equation}
    u^{\text{Ext}}(x,y) = R^2 - x^2 - y^2,
    \label{eq:circle_exact}
\end{equation}
which vanishes on $\partial\Omega$ and yields a constant source term $b(x,y)=4$. 

Figure~\ref{fig:circle_conv} shows the convergence of the relative $L^2$ and energy errors for $p = 1, 2, 3$ with $\beta = 0.01$, under uniform mesh refinement ($n_x = 8, 16, 32, 64, 128, 256$). The estimated convergence rates are summarized in Table~\ref{tab:circle_conv_rates}.

\begin{table}[htbp]
\centering
\caption{Estimated convergence rates for the circular domain ($\beta = 0.01$)}
\label{tab:circle_conv_rates}
\begin{tabular}{lccc}
\hline
 & \textbf{$L^2$ rate (last 4 points)} & \textbf{Energy rate (last 4 points)} & \textbf{Theoretical optimum} \\
\hline
$p = 1$ & $2.2$ & $1.2$ & $2.0$ / $1.0$ \\
$p = 2$ & $2.9$ & $1.8$ & $3.0$ / $2.0$ \\$p = 3$ & $3.4$ & $2.5$ & $4.0$ / $3.0$ \\
\hline
\end{tabular}
\end{table}

For $p = 1$ and $p = 2$, the estimated rates are close to their respective theoretical optima. For $p = 3$, the rates are $\mathcal{O}(h^{3.4})$ in $L^2$-norm and $\mathcal{O}(h^{2.5})$ in energy-norm, below the nominal $\mathcal{O}(h^{4})$ and $\mathcal{O}(h^{3})$. This suboptimality is attributed to the combined effects of the reduced regularity of $\Phi_\beta$ in the transition layer, the finite accuracy of the adaptive boundary-integration quadrature, and the fact that, on the finer meshes, the error is sufficiently small that these previously subdominant error sources become visible.

\begin{figure}[htbp]
\centering
\includegraphics[width=0.48\textwidth]{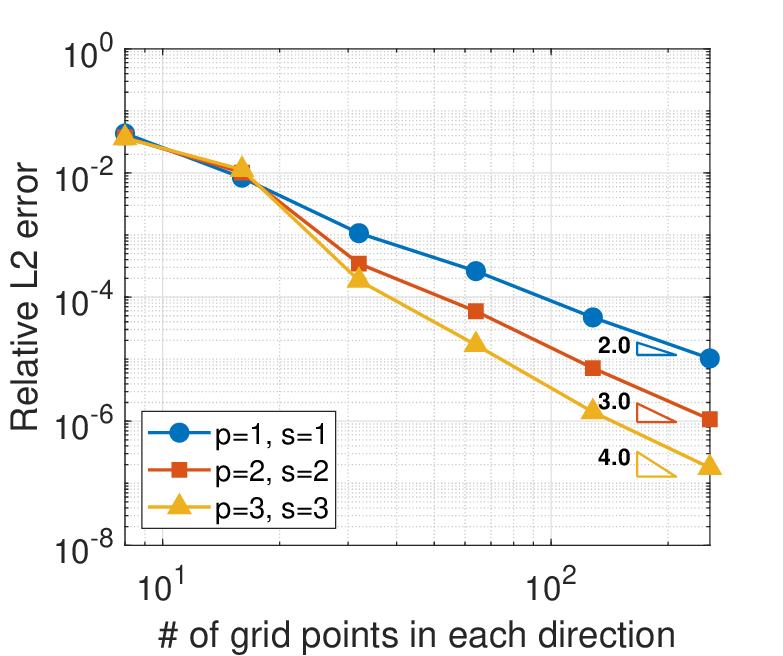}
\hfill
\includegraphics[width=0.48\textwidth]{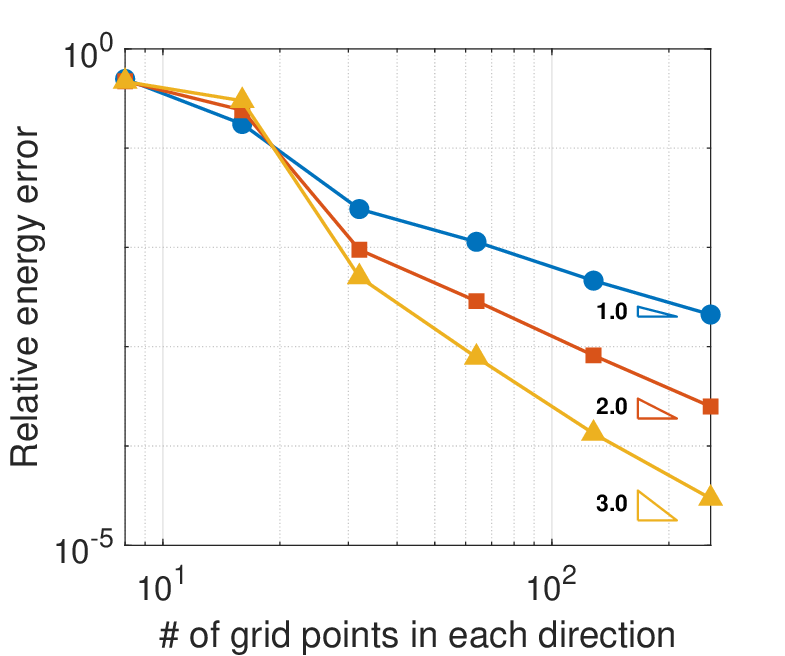}
\caption{Convergence of the relative $L^2$ error (left) and relative energy error (right) for the 2D circular domain ($\beta = 0.01$). Triangles indicate the theoretical optimal convergence rates.}
\label{fig:circle_conv}
\end{figure}

We isolate the role of the void stabilization parameter $\gamma$ by fixing the mesh ($128 \times 128$) and sweeping $\gamma$ over nine decades, from $10^{-2}$ to $10^{-12}$. All other parameters are held constant ($p=3$, $s=3$, adaptive Gauss quadrature with $20\times20$ boundary points and $6\times6$ interior points).

\begin{figure}[htbp]
\centering
\includegraphics[width=0.48\textwidth]{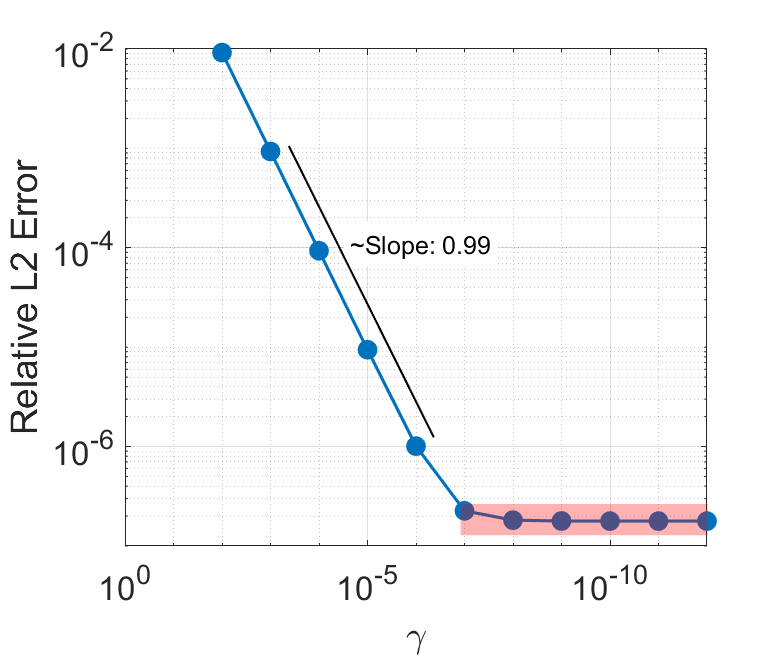}
\hfill
\includegraphics[width=0.48\textwidth]{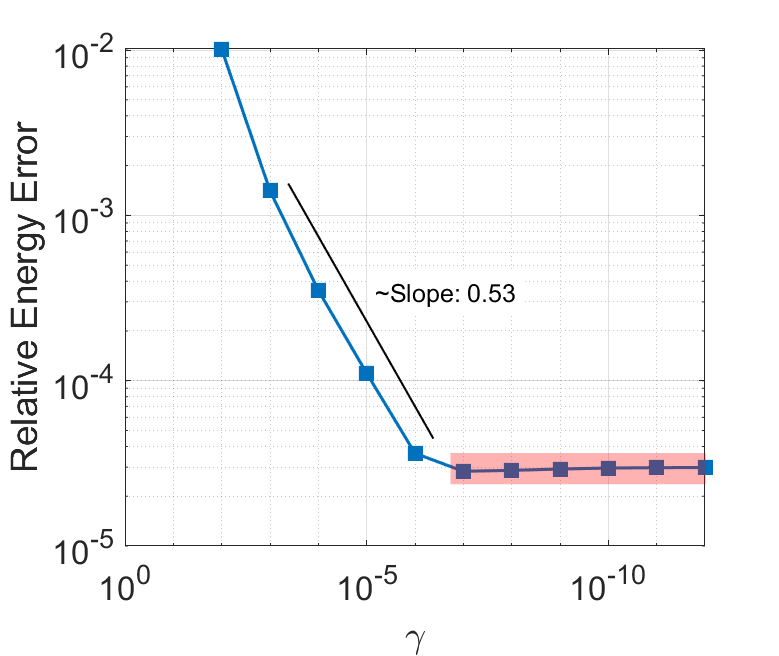}
\caption{Relative $L^2$ error (left) and relative energy error (right) versus void stabilization parameter $\gamma$ on a fixed $128\times 128$ mesh ($p=3$). The fitted slope over points 2-5 ($\gamma = 10^{-3}$ to $10^{-6}$) is annotated on each panel. The red-shaded region indicates the \textbf{Discretization-dominated regime ($\gamma \lesssim 10^{-7}$).}, in which the total error saturates.}
\label{fig:gamma_study}
\end{figure}

Figure~\ref{fig:gamma_study} plots the relative $L^2$ and energy errors as functions of $\gamma$ on a log-log scale. Two regimes are evident:
(1) \textbf{Stabilization-dominated regime} ($\gamma \gtrsim 10^{-7}$). 
For large $\gamma$, the void stabilization distorts the solution throughout the physical domain $\Omega$. The energy error grows as $\mathcal{O}(\gamma^{0.5})$, consistent with the $\sqrt{\gamma}$ scaling of the third term in Corollary 2, where a linear regression over points 2-5 (corresponding to $\gamma = 10^{-3}$ through $10^{-6}$) yields a fitted slope of approximately $0.5$ for the energy norm. The $L^2$ error exhibits a fitted slope of approximately $1$ over the same interval.
(2) \textbf{Discretization-dominated regime ($\gamma \lesssim 10^{-7}$).} As $\gamma$ is further reduced, both the $L^2$ and energy errors saturate to a plateau. In this regime the $\gamma$-induced error has fallen below the intrinsic discretization error of the $128\times128$ mesh, so further reduction of $\gamma$ yields no improvement in accuracy. This shows that choosing $\gamma = 10^{-10}$ is appropriate in the above convergence study.


\subsubsection{2D annular domain with an eccentric hole}
\label{sec:numerical:void}

The second benchmark introduces a non-convex, multiply-connected geometry: a circular outer boundary of radius $R_1=0.7$ containing an eccentric circular hole of radius $R_2=0.2$ centered at $(x_0, y_0)=(0.15, 0)$. For a multiply-connected domain defined by the set difference of an outer and an inner region, the signed distance function combines the individual distance fields via the maximum operator:
\begin{equation}
    \mathrm{SDF}(x,y) = \max\!\Bigl(\sqrt{x^2 + y^2} - R_1,\; R_2 - \sqrt{(x - x_0)^2 + y^2}\Bigr),
    \label{eq:sdf_void}
\end{equation}
which is negative when both $\sqrt{x^2+y^2} < R_1$ (inside the outer circle) and $\sqrt{(x-x_0)^2+y^2} > R_2$ (outside the inner hole) are satisfied. See Fig.~\ref{fig:sdf} for the sign convention. The BFF follows from Eq.~\eqref{eq:Phi_def}. Figure~\ref{fig:void_geom} shows the SDF and the resulting BFF $\Phi_\beta$ for the annular domain.

\begin{figure}[htbp]
\centering
\includegraphics[width=0.48\textwidth]{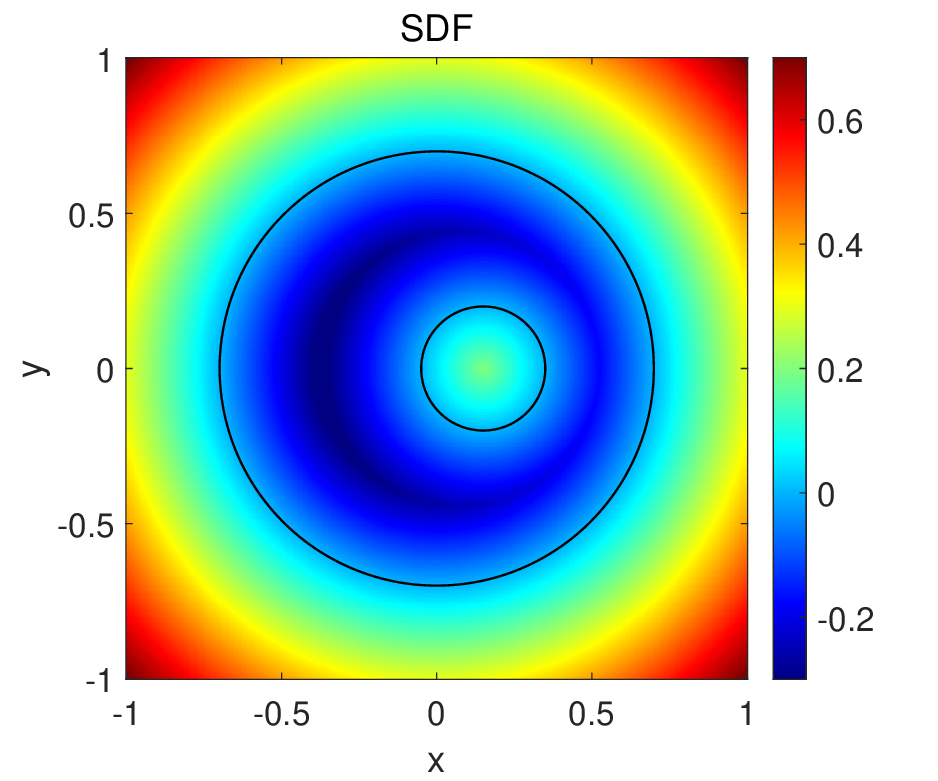}
\hfill
\includegraphics[width=0.48\textwidth]{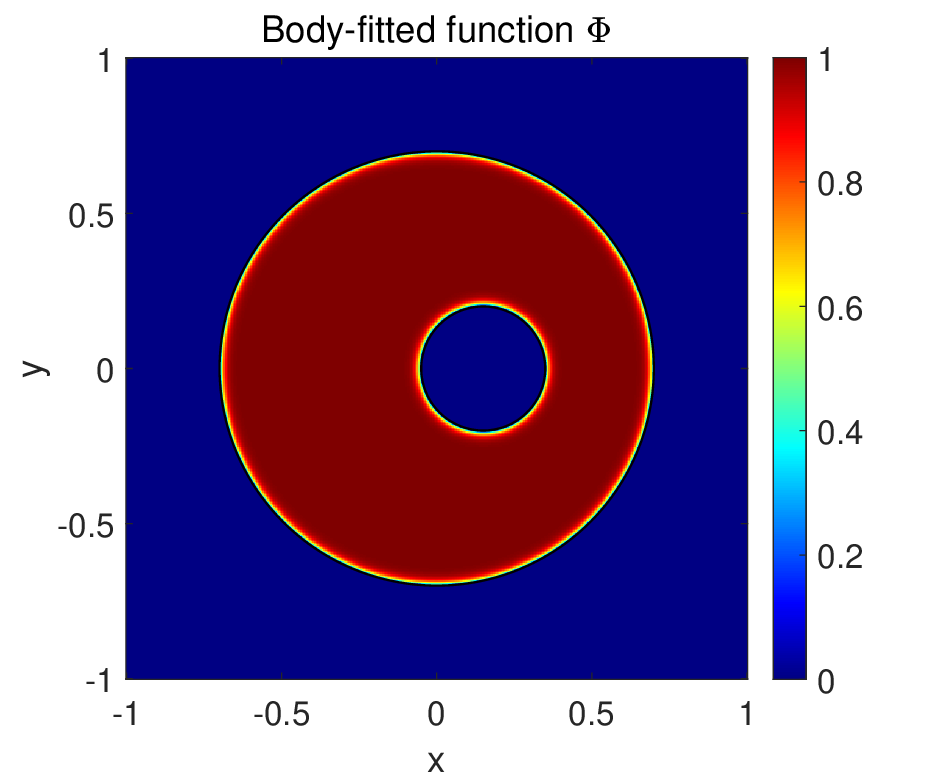}
\caption{SDF and BFF $\Phi_\beta$ for the 2D annular domain with eccentric hole ($R_1=0.7$, $R_2=0.2$, $x_0=0.15$, $\beta=0.01$). The black solid lines mark the outer boundary and the inner hole.}
\label{fig:void_geom}
\end{figure}

The exact solution,
\begin{equation}
    u^{\text{Ext}}(x,y) = -(R_1^2 - x^2 - y^2)\,\bigl(R_2^2 - (x - x_0)^2 - y^2\bigr),
    \label{eq:void_exact}
\end{equation}
is constructed to vanish on both the outer boundary and the inner hole, satisfying the homogeneous Dirichlet condition. The corresponding source term $b(x,y)=-\Delta u^{\text{Ext}}$ is obtained analytically. 



The key challenge in this example is the accurate resolution of the narrow ligament between the outer boundary and the eccentric hole, which tests the geometric fidelity of the body-fitted description under aggressive mesh refinement. Figure~\ref{fig:void_conv} shows the convergence of the relative $L^2$ and energy errors for polynomial orders $p = 1, 2, 3$, all using the adaptive Gauss quadrature of Table~\ref{tab:gauss_adaptive}. The mesh size is defined as $h = 2/n_x$, where $n_x$ is the number of elements in the $x$-direction.

For $p = 1$, the $L^2$ error converges approximately as $\mathcal{O}(h^{2.2})$ and the energy error as $\mathcal{O}(h^{1.2})$, matching the expected optimal rates $\mathcal{O}(h^{p+1})$ and $\mathcal{O}(h^{p})$ respectively. For $p = 2$, the $L^2$ error decays as $\mathcal{O}(h^{2.8})$ and the energy error as $\mathcal{O}(h^{1.7})$, again in agreement with the theory. For $p = 3$, the $L^2$ rate is approximately $\mathcal{O}(h^{3.4})$ and the energy rate is $\mathcal{O}(h^{2.5})$; the slight suboptimality relative to the nominal $\mathcal{O}(h^4)$ and $\mathcal{O}(h^3)$ rates is attributed to the narrow-ligament geometry, where the element size must be sufficiently small to resolve the curvature of both the outer and inner boundaries simultaneously; this condition is met only on the finest meshes. The convergence rates are summarized in Table~\ref{tab:void_conv_rates}.

\begin{table}[htbp]
\centering
\caption{Estimated convergence rates for the annular domain (last three mesh refinements)}
\label{tab:void_conv_rates}
\begin{tabular}{lccc}
\hline
 & \textbf{$L^2$ rate (last 4 points)} & \textbf{Energy rate (last 4 points)} & \textbf{Theoretical optimum} \\
\hline
$p = 1$ & $2.2$ & $1.2$ & $2.0$ / $1.0$ \\
$p = 2$ & $2.8$ & $1.7$ & $3.0$ / $2.0$ \\
$p = 3$ & $3.4$ & $2.5$ & $4.0$ / $3.0$ \\
\hline
\end{tabular}
\end{table}

\begin{figure}[htbp]
\centering
\includegraphics[width=0.48\textwidth]{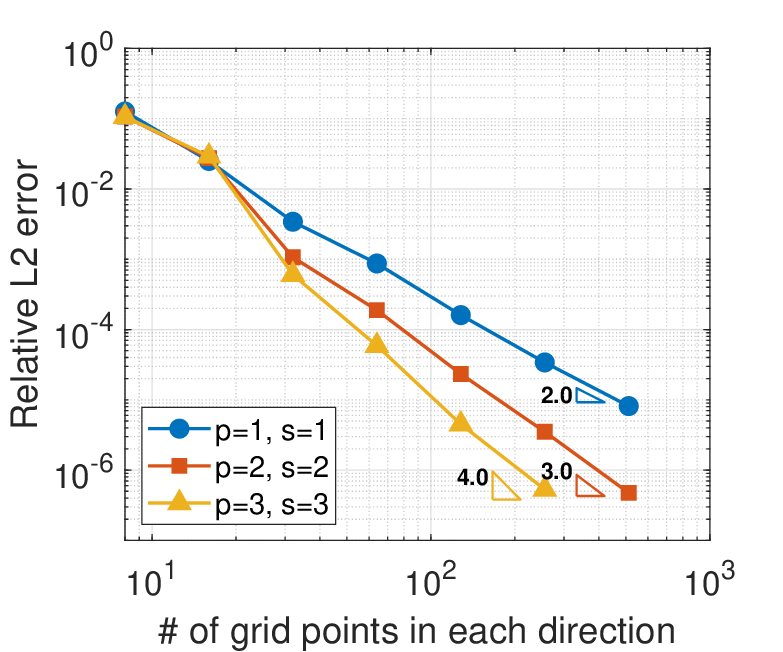}
\hfill
\includegraphics[width=0.48\textwidth]{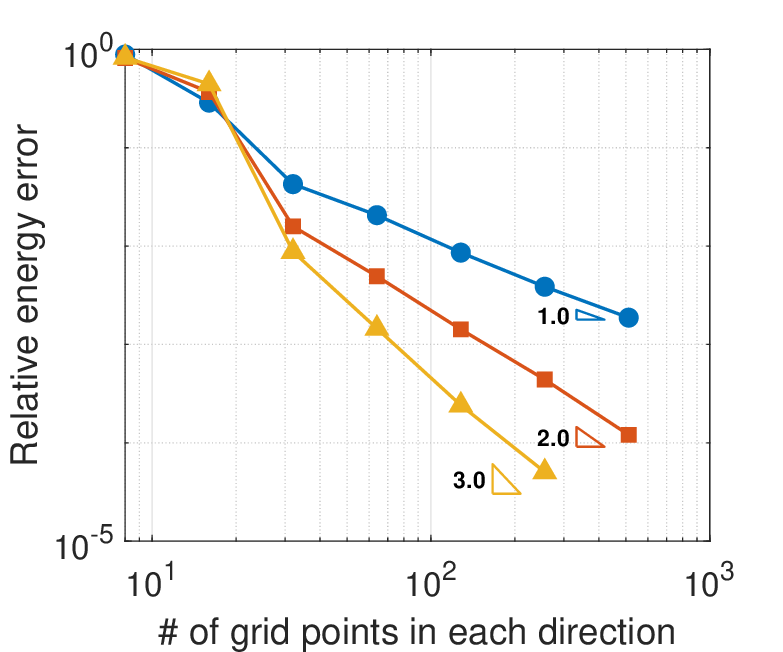}
\caption{Convergence of the relative $L^2$ error (left) and relative energy error (right) for the 2D annular domain with eccentric hole ($R_1=0.7$, $R_2=0.2$, $x_0=0.15$, $\beta=0.01$). The dashed lines indicate the theoretical optimal convergence rates $\mathcal{O}(h^{p+1})$ for $L^2$ and $\mathcal{O}(h^{p})$ for the energy norm.}
\label{fig:void_conv}
\end{figure}

\subsection{2D complex geometries}
\label{sec:numerical:starfish}

\subsubsection{2D starfish computational domain with a central hole}
\label{sec:numerical:starfish}

The next benchmark considers the Poisson equation on a more complex geometry: a 7-petaled starfish domain with a central hole. The domain is embedded within a square background box $\mathcal{D} = [0,2] \times [0,2]$ centered at $\mathbf{c}=(1,1)$. Using local polar coordinates $\rho = \sqrt{(x-1)^2+(y-1)^2}$ and $\theta = \operatorname{atan2}(y-1, x-1)$, the computational domain is defined as
\begin{equation}
    \Omega = \Big\{ (x,y) \mid r_{\text{hole}} \le \rho \le R(\theta) \Big\},
\end{equation}
where the outer boundary is given by $R(\theta) = r_{\text{base}} + \alpha \cos(7\theta)$. The geometric parameters are set to a mean petal radius $r_{\text{base}} = 0.70$, petal amplitude $\alpha = 0.24$, and a central hole radius $r_{\text{hole}} = 0.16$. Consequently, the domain boundary $\partial\Omega$ consists of two distinct components: the outer wavy curve and the inner circular void.

Homogeneous Dirichlet boundary conditions ($u=0$) are prescribed on all of $\partial\Omega$, with constant body force $b = 1.0$ as in Eq.~\eqref{eq:poisson_strong}.

The Dirichlet conditions are enforced geometrically through the BFF $\Phi$, utilizing the ansatz $u^h = \Phi v^h$ (homogeneous case) with $v^h$ pinned to $0$ on the boundary of the background box. The BFF is defined via Eq.~\eqref{eq:Phi_def} with transition parameter $\beta = 0.002$. The immersed weak form follows Eq.~\eqref{eq:weak_strong}:
\begin{equation}
    A_\gamma(w^h, v^h) = \int_{\mathcal{D}} (\Phi^2 + \gamma) \nabla w^h \cdot \nabla v^h - \Phi \nabla^2\Phi \, w^h v^h \,\mathrm{d}\mathcal{D},
\end{equation}
with the corresponding load functional $L(w^h) = \int_{\mathcal{D}} \Phi w^h b \,\mathrm{d}\mathcal{D}$. The void stabilization parameter is set to $\gamma = 10^{-10}$. 

Unlike the circular benchmark, this problem does not possess a closed-form exact solution. Instead, the numerical results are compared against a high-fidelity reference solution obtained using a body-fitted $P_1$ finite element solve on an identical triangulated starfish domain via FEniCSx. Figure~\ref{fig:starfish_solution} visualizes this comparison, displaying the reference field, the immersed C-HiDeNN solution, and the point-wise absolute error.

\begin{figure}[htbp]
\centering
\includegraphics[width=\textwidth]{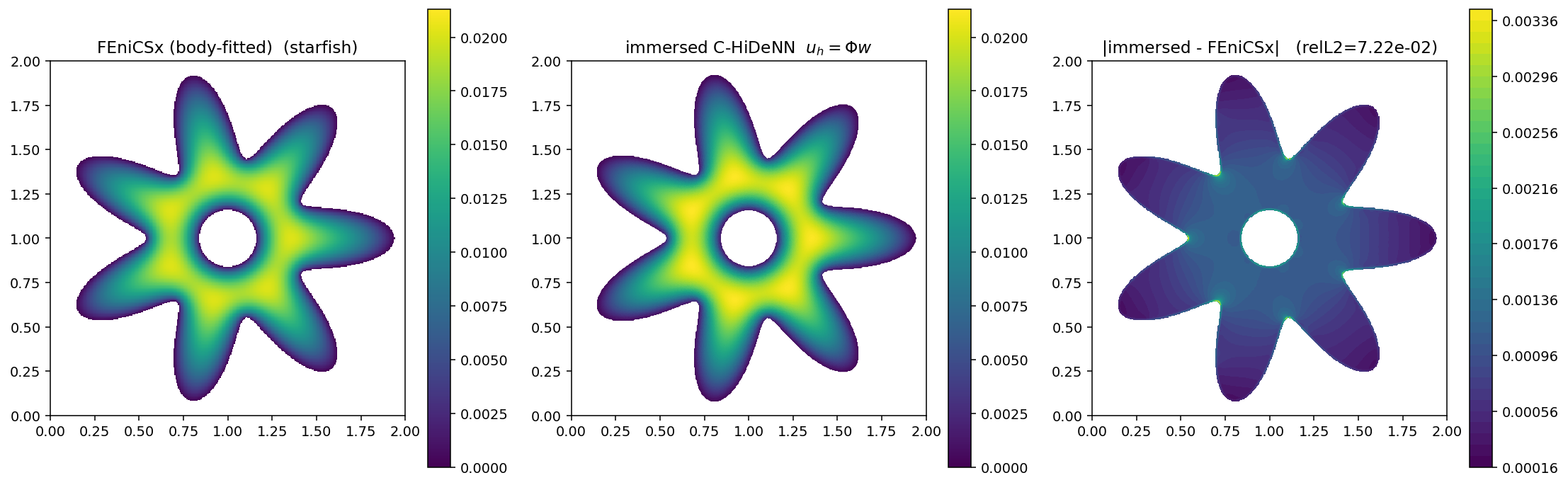}
\caption{Solution fields for the 2D starfish domain with a central hole. Left: Body-fitted FEniCSx reference solution. Middle: Immersed C-HiDeNN solution ($u^h = \Phi v^h$). Right: Point-wise absolute error between the immersed and reference solutions.}
\label{fig:starfish_solution}
\end{figure}

\subsubsection{2D gear computational domain with a central hole}
\label{sec:numerical:gear}

To further evaluate the framework's capability in resolving sharp geometric features, the third benchmark considers the Poisson equation on a 20-tooth gear domain with a central hole. The domain is embedded within the same square background box $\mathcal{D} = [0,2] \times [0,2]$ centered at $\mathbf{c}=(1,1)$. Using the local polar coordinates $\rho$ and $\theta$ defined previously, the computational domain is defined as
\begin{equation}
    \Omega = \Big\{ \text{inside the 20-tooth gear polygon} \Big\} \setminus \Big\{ \rho < r_{\text{hole}} \Big\}.
\end{equation}

The outer boundary is defined as a piecewise-linear polygon constructed from $80$ vertices located at angles $\theta_i = 2\pi i/80$. The corresponding radii cycle with a period of $4$ in the sequence $[r_{\text{in}}, r_{\text{out}}, r_{\text{out}}, r_{\text{in}}]$. This generates square-like teeth, where each tooth rises from the root radius to a flat tip at the tooth radius and back via straight segments. The geometric parameters are set to a tooth-tip radius $r_{\text{out}} = 0.90$, a tooth-root radius $r_{\text{in}} = 0.70$, and a central hole radius $r_{\text{hole}} = 0.50$. Thus, the domain boundary $\partial\Omega$ consists of the outer 20-tooth gear polygon and the inner circular void.

In direct contrast to the smooth analytic curve of the starfish domain, the gear's outer edge introduces multiple sharp, re-entrant corners at the $80$ polygon vertices. These discontinuities present a significantly more stringent geometric challenge for resolving the BFF $\Phi$.

The physical problem remains identical to the starfish benchmark (constant body force $b = 1.0$, homogeneous Dirichlet boundary conditions), with the BFF, immersed weak form, and stabilization parameters ($\gamma = 10^{-10}$, $\beta = 0.002$) retained from the previous examples. 


Similar to the starfish benchmark, the numerical results are compared against a high-fidelity reference solution obtained via a body-fitted $P_1$ finite element solve on an identical triangulated gear domain using FEniCSx. Figure~\ref{fig:gear_solution} displays the reference solution, the immersed C-HiDeNN solution, and the point-wise absolute error, illustrating the framework's performance around the sharp polygonal features.

\begin{figure}[htbp]
\centering
\includegraphics[width=\textwidth]{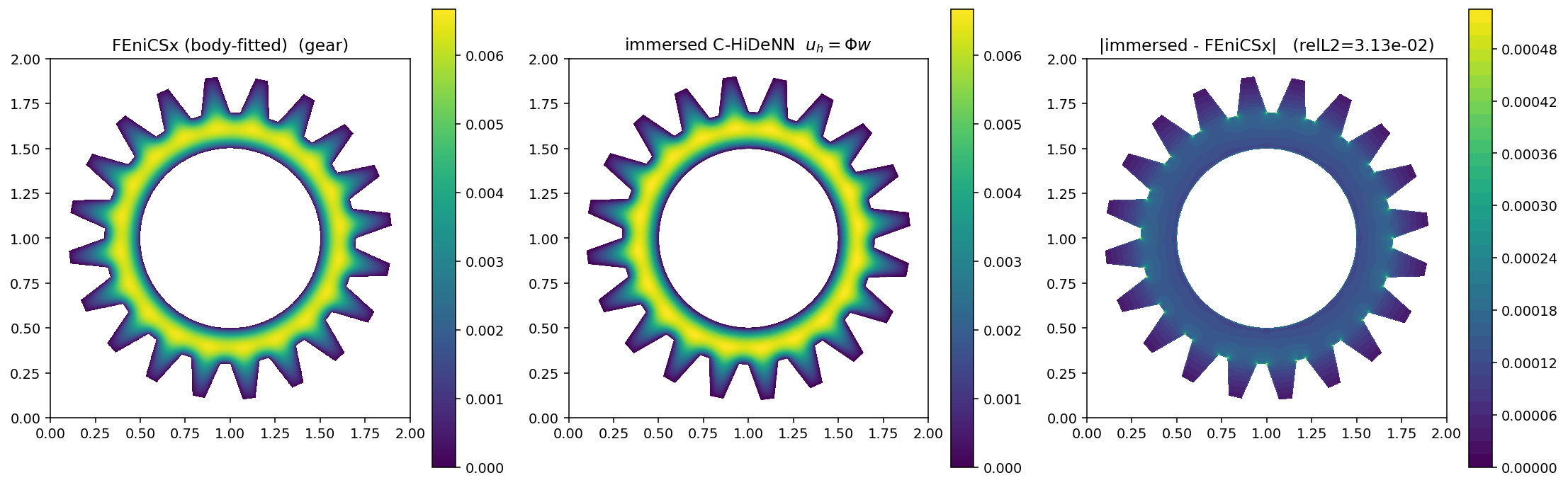}
\caption{Solution fields for the 2D gear domain with a central hole. Left: Body-fitted FEniCSx reference solution. Middle: Immersed C-HiDeNN solution ($u^h = \Phi v^h$). Right: Point-wise absolute error between the immersed and reference solutions.}
\label{fig:gear_solution}
\end{figure}

\subsubsection{3D sphere with a concentric spherical void}
\label{sec:numerical:void3d}

The final benchmark extends the immersed body-fitted formulation to three dimensions
on a multiply-connected solid: a spherical shell formed by an outer sphere of radius
$R_1=0.7$ with a concentric spherical void of radius $R_2=0.3$, both centered at
$\mathbf{x}_c=(1,1,1)$ inside the background box $[0,2]^3$. Writing
$r=\lVert\mathbf{x}-\mathbf{x}_c\rVert$, the signed distance function again combines the
outer and inner fields through the maximum operator,
\begin{equation}
    \mathrm{SDF}(\mathbf{x}) = \max\!\bigl(r - R_1,\; R_2 - r\bigr),
    \label{eq:sdf_void3d}
\end{equation}
which is negative precisely in the shell $R_2 < r < R_1$. The BFF
$\Phi_\beta$ follows from Eq.~\eqref{eq:Phi_def}. The exact solution,
\begin{equation}
    u^{\text{Ext}}(\mathbf{x}) = \bigl(R_1^2 - r^2\bigr)\bigl(r^2 - R_2^2\bigr),
    \label{eq:void3d_exact}
\end{equation}
vanishes on both the outer sphere and the inner void, enforcing the homogeneous
Dirichlet condition, and the source term $b=-\Delta u^{\text{Ext}}$ is obtained
analytically.


Figure~\ref{fig:void3d_conv} shows the relative $L^2$ and energy errors for
$s=p=1$ and $s=p=2$, with mesh size $h = 2/n_x$. The measured rates over the finest
meshes, summarized in Table~\ref{tab:void3d_conv_rates}, recover the expected optimal
orders $\mathcal{O}(h^{p+1})$ in $L^2$ and $\mathcal{O}(h^{p})$ in energy, confirming
that the body-fitted description retains full accuracy in three dimensions.

\begin{table}[htbp]
\centering
\caption{Estimated convergence rates for the 3D spherical shell (finest mesh refinements)}
\label{tab:void3d_conv_rates}
\begin{tabular}{lccc}
\hline
 & \textbf{$L^2$ rate} & \textbf{Energy rate} & \textbf{Theoretical optimum} \\
\hline
$s=p=1$ & $2.14$ & $1.10$ & $2.0$ / $1.0$ \\
$s=p=2$ & $2.89$ & $1.89$ & $3.0$ / $2.0$ \\
\hline
\end{tabular}
\end{table}

\begin{figure}[htbp]
\centering
\includegraphics[width=0.95\textwidth]{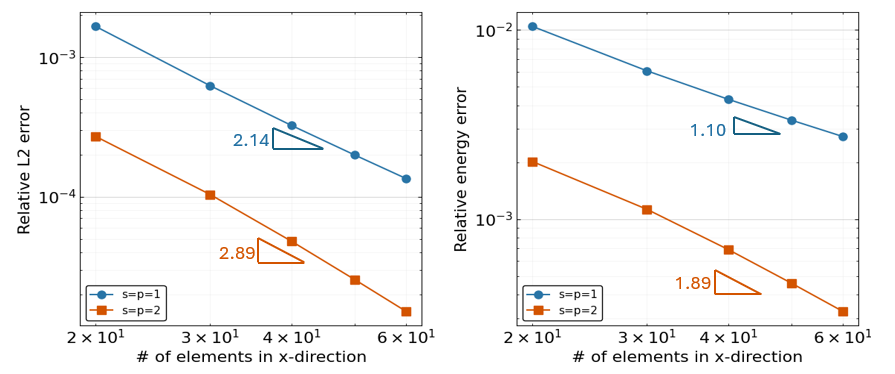}

\caption{Convergence of the relative $L^2$ error (left) and relative energy error
(right) for the 3D sphere with a concentric spherical void ($R_1=0.7$, $R_2=0.3$,
$\mathbf{x}_c=(1,1,1)$). The triangles indicate the fitted slopes.}
\label{fig:void3d_conv}
\end{figure}


\section{Discussions}
\label{sec:discussions}
The Immersed Tensor Decomposition (ITD) framework successfully integrates non-conforming geometries into reduced-order models by intertwining a continuous body-fitted representation with a separable C-HiDeNN-TD solver. This approach introduces several key advantages and practical considerations:

Strong Boundary Enforcement without Interface Quadrature: Unlike classical immersed methods that rely on weak Nitsche-type enforcement, ITD satisfies Dirichlet conditions strongly via the BFF $\Phi$. This eliminates penalty parameter tuning and complex interface quadratures, preserving the purely Cartesian tensor-product structure required for efficient reduced-order modeling.

Algebraic Complexity Reduction: A direct substitution of the body-fitted trial function into the weak form would generate six distinct geometric coefficient fields in~3D (five in~2D). Our exact algebraic cancellation reduces this to just three scalar fields ($\Phi$, $\Phi^2$, and $\Phi\,\nabla^2\Phi$). This reduction cuts the tensor compression overhead by nearly a half. 

Balancing Hyperparameters ($\beta$ and $\gamma$): Practical implementation requires balancing the exponential transition scaling $\beta$—which must smooth the SDF without demanding excessively dense quadrature—against the void stabilization $\gamma$. As established numerically and theoretically, $\gamma$ must scale as $\mathcal{O}(h^{2p})$ to effectively stabilize the void without polluting the solution in the physical domain.

\section{Conclusions and Future Work}\label{sec:CONCLUSION}
This paper presented the Immersed Tensor Decomposition (ITD) framework, which successfully integrates mesh-free geometric representations with the separable C-HiDeNN-TD reduced-order solver. By utilizing a BFF ($\Phi$) derived from a signed-distance field, the formulation achieves strong, exact enforcement of Dirichlet boundary conditions on regular Cartesian background grids. This approach circumvents the need for conforming body-fitted meshes, variational penalty parameters, and irregular interface quadratures.A central mathematical contribution of this work is the algebraically transformed weak form, which collapses the geometric complexity from six initial geometric coefficients in~3D (five in~2D) into just three volume fields ($\Phi$, $\Phi^2$, and $\Phi\,\nabla^2\Phi$). This reduction is critical for maintaining the efficiency of the low-rank Tucker decomposition and avoiding the curse of dimensionality. We established a rigorous a priori error estimate demonstrating that the ITD formulation preserves the optimal $\mathcal{O}(h^{p+1})$ convergence rate of the underlying high-order C-HiDeNN interpolation, provided the void stabilization parameter and the Tucker compression perturbations are properly scaled. Numerical evaluations on canonical 2D domains and 3D geometries confirmed these theoretical bounds, demonstrating robust sub-domain resolution and optimal energy norm convergence. Future research will focus on extending the ITD framework in the following directions: 

Vector-field and non-linear PDEs: While the current framework was derived and validated for the scalar Poisson equation, extending the formulation to coupled vector-field problems, such as linear elasticity, is necessary for broader computational mechanics and computer-aided engineering applications.

Direct ingestion of voxel-native data: We intend to deploy the ITD framework directly on raw volumetric datasets, such as medical imaging (CT/MRI) and additive manufacturing lattice structures. In these scenarios, the generation of signed-distance fields and subsequent tensor compression can completely bypass traditional, information-lossy CAD surface reconstruction workflows. 

Adaptive rank selection: The derived error estimates indicate that Tucker perturbation fields act as an effective additional stabilization. Developing an adaptive rank-selection algorithm to dynamically balance the tensor compression error against the intrinsic discretization error will further optimize offline pre-computation costs.

\appendix

\section{Proof of Theorem~\ref{thm:error}}
\label{appenx:Proof}

For simplicity, we consider homogeneous Dirichlet boundary condition ($g^h = 0$); the non‑homogeneous case follows by a standard lifting argument. 
The stabilized weak form can be written as
\begin{equation}\label{eq:general_weak}
    A_\gamma(W^h, u^h) = L(W^h, b),
\end{equation}
where the bilinear form $A_\gamma(\cdot,\cdot)$ and the linear functional $L(\cdot,b)$ in Eq. (\ref{eq:weak_strong}) are defined by
\begin{eqnarray}
    A_\gamma(W^h, u^h) &=& \int_{\mathcal{D}} \nabla W^h \cdot \nabla u^h \,\mathrm{d}\mathcal{D} + \int_{\mathcal{D}} \gamma(\mathbf{x}) \nabla (W^h/\Phi) \cdot \nabla (u^h/\Phi) \,\mathrm{d}\mathcal{D}, \\
    & = & \int_{\mathcal{D}} \bigl(\Phi^2 + \gamma(\mathbf{x})\bigr)\,(\nabla (W^h/\Phi) \cdot \nabla (u^h/\Phi)) \,\mathrm{d}\mathcal{D}
    \;-\; \int_{\mathcal{D}} (\nabla^2 \Phi / \Phi)\, W^h\, u^h \,\mathrm{d}\mathcal{D}, \\
    L(W^h, b) &=& \int_{\mathcal{D}} W^h\,b \,\mathrm{d}\mathcal{D}.
\end{eqnarray}
The test and trial spaces coincide:
\begin{equation}
    \mathcal{V}^h = \mathcal{S}^h = \bigl\{ W^h = \Phi w^h :\; w^h = \mathcal{I}^h w,\; w \in H^{p+1}(\mathcal{D}),\; w|_{\partial\mathcal{D}} = 0 \bigr\},
\end{equation}
where $\mathcal{I}^h$ denotes the C‑HiDeNN interpolation operator. 
The energy norm is $\|w\|_\gamma = \sqrt{A_\gamma(w,w)}$.

\begin{lemma}[Strang's first lemma] \label{lemma1}
    Let $u$ be the exact solution and $u^h$ the discrete solution satisfying $A_\gamma(W^h, u^h) = L(W^h, b)$ for all $W^h \in \mathcal{V}^h$. Then
    \begin{equation}\label{eq:strang}
        \|u - u^h\|_\gamma \;\leq\; 2 \inf_{z^h \in \mathcal{S}^h} \|u - z^h\|_\gamma \;+\; \sup_{\eta^h \in \mathcal{V}^h\setminus\{0\}} \frac{\bigl| A_\gamma(\eta^h, u) - L(\eta^h, b) \bigr|}{\|\eta^h\|_\gamma}.
    \end{equation}
\end{lemma}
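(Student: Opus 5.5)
The plan is the classical Strang argument. It rests on one structural fact: $A_\gamma$ is a symmetric, positive semidefinite bilinear form, so $\|\cdot\|_\gamma=\sqrt{A_\gamma(\cdot,\cdot)}$ satisfies the Cauchy--Schwarz inequality.

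\textbf{Step 1: $A_\gamma$ is a semi-inner product.} I would use the first representation of $A_\gamma$ in the appendix:
\[
A_\gamma(W,U)=\int_{\mathcal D}\nabla W\cdot\nabla U\,\mathrm d\mathcal D+\int_{\mathcal D}\gamma\,\nabla(W/\Phi)\cdot\nabla(U/\Phi)\,\mathrm d\mathcal D .
\]
This is manifestly symmetric. Since $\gamma\ge 0$, it also satisfies $A_\gamma(W,W)\ge0$. The Cauchy--Schwarz inequality $|A_\gamma(W,U)|\le\|W\|_\gamma\|U\|_\gamma$ then follows by the usual argument, which only needs nonnegativity of the quadratic form, not definiteness.

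I also need $A_\gamma(\cdot,u)$ to make sense. This holds because the exact solution is $u=\Phi\hat v$ with $u/\Phi=\hat v\in H^1(\mathcal D)$ (the harmonic extension), and every element of $\mathcal V^h$ has the form $\Phi w^h$ with $w^h\in H^1(\mathcal D)$.

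\textbf{Step 2: the key identity.} Fix an arbitrary $z^h\in\mathcal S^h=\mathcal V^h$ and set $\eta^h=u^h-z^h$, which lies in $\mathcal V^h$. If $\|\eta^h\|_\gamma=0$, the bound in Step 3 below is trivial. Otherwise, add and subtract $A_\gamma(\eta^h,u)$ and use the discrete equation $A_\gamma(\eta^h,u^h)=L(\eta^h,b)$:
\[
\|\eta^h\|_\gamma^2=A_\gamma(\eta^h,u^h-z^h)=\bigl[L(\eta^h,b)-A_\gamma(\eta^h,u)\bigr]+A_\gamma(\eta^h,u-z^h).
\]

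\textbf{Step 3: bound and divide.} Apply Cauchy--Schwarz from Step 1 to the last term. Then divide by $\|\eta^h\|_\gamma$ to get
\[
\|\eta^h\|_\gamma\le\|u-z^h\|_\gamma+\sup_{\eta\in\mathcal V^h\setminus\{0\}}\frac{|A_\gamma(\eta,u)-L(\eta,b)|}{\|\eta\|_\gamma}.
\]

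\textbf{Step 4: conclude.} The triangle inequality for the seminorm gives
\[
\|u-u^h\|_\gamma\le\|u-z^h\|_\gamma+\|\eta^h\|_\gamma,
\]
which, combined with Step 3, yields the bound with $2\|u-z^h\|_\gamma$ in front. Taking the infimum over $z^h$ gives \eqref{eq:strang}.

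\textbf{Main obstacle.} The only delicate point is the well-posedness and positivity used in Step 1, not the algebra. One must check that the two representations of $A_\gamma$ agree on $\mathcal V^h$, i.e.\ that the integration by parts producing the $\Phi\nabla^2\Phi$ term has no boundary contribution. This holds because $\Phi w^h$ vanishes on $\partial\Omega$ and $w^h$ vanishes on $\partial\mathcal D$. One must also check that the exact $u$, extended by $\Phi\hat v$ into the void, is admissible in the first (sum-of-squares) form. I would rely only on that first form, so that positivity is immediate and no inf-sup or coercivity constant enters.
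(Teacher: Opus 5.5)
Your proof is correct and follows the paper's argument. The paper sets $e^h = z^h - u^h$ (you use the opposite sign), splits $\|e^h\|_\gamma^2$ via the discrete equation into $A_\gamma(e^h, z^h-u)$ plus the consistency residual, applies Cauchy--Schwarz, divides by $\|e^h\|_\gamma$, and finishes with the triangle inequality. Your explicit check that $A_\gamma$ is symmetric positive semidefinite (so Cauchy--Schwarz and the triangle inequality hold for $\|\cdot\|_\gamma$), and your handling of the case $\|\eta^h\|_\gamma = 0$, supply details the paper leaves implicit.
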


\begin{proof}
    For any $z^h \in \mathcal{S}^h$, set $e^h = z^h - u^h$. Using the weak form,
    \begin{align*}
        \|e^h\|_\gamma^2 &= A_\gamma(e^h, z^h - u^h) = A_\gamma(e^h, z^h) - L(e^h, b) \\
                         &= A_\gamma(e^h, z^h - u) + A_\gamma(e^h, u) - L(e^h, b).
    \end{align*}
    By the Cauchy–Schwarz inequality,
    \[
        A_\gamma(e^h, z^h - u) \le \|e^h\|_\gamma \|z^h - u\|_\gamma.
    \]
    Hence,
    \[
        \|e^h\|_\gamma \le \|z^h - u\|_\gamma + \frac{|A_\gamma(e^h, u) - L(e^h, b)|}{\|e^h\|_\gamma}
                      \le \|z^h - u\|_\gamma + \sup_{\eta^h \in \mathcal{V}^h\setminus\{0\}} \frac{|A_\gamma(\eta^h, u) - L(\eta^h, b)|}{\|\eta^h\|_\gamma}.
    \]
    The triangle inequality $\|u - u^h\|_\gamma \le \|u - z^h\|_\gamma + \|z^h - u^h\|_\gamma$ yields
    \[
        \|u - u^h\|_\gamma \le 2\|u - z^h\|_\gamma + \sup_{\eta^h \in \mathcal{V}^h\setminus\{0\}} \frac{|A_\gamma(\eta^h, u) - L(\eta^h, b)|}{\|\eta^h\|_\gamma}.
    \]
    Taking the infimum over $z^h$ gives~\eqref{eq:strang}.
\end{proof}

\noindent\textbf{Proof of Theorem~\ref{thm:error}.}
Let the exact solution be extended to the whole background domain by $u = \Phi \hat{v}$, where $\hat{v}$ is the harmonic extension. 
Since $\hat{v}$ may have limited smoothness across $\partial\Omega$, we introduce a Stein extension $\tilde{v} \in H^{p+1}(\mathcal{D})$ with $\tilde{v}|_\Omega = v|_\Omega$.
Using~\eqref{eq:strang}, we estimate the two terms separately.

\paragraph{Approximation error}
\begin{align*}
    2\inf_{z^h \in \mathcal{S}^h} \|u - z^h\|_\gamma
    &\le 2 \|u - \Phi\tilde{v}\|_\gamma +2 \inf_{z^h} \|\Phi\tilde{v} - z^h\|_\gamma \\
    &= 2 \Bigl( \int_{\mathcal{D}\setminus\Omega} \gamma\, \bigl|\nabla(\hat{v}-\tilde{v})\bigr|^2 \,\mathrm{d}\mathcal{D} \Bigr)^{1/2} + C_1 h^p\|\tilde{v}\|_{H^{p+1}(\mathcal{D})} \\
    &\le C_3 \sqrt{\sup_{\mathbf{x}\in\mathcal{D}\setminus\Omega} \gamma(\mathbf{x})}\,\|\hat{v} - \tilde{v}\|_{H^1(\mathcal{D}\setminus\overline{\Omega})} + C_1 h^{p}\, \|\tilde{v}\|_{H^{p+1}(\mathcal{D})},
\end{align*}
where the first term reflects the mismatch between the harmonic and smooth extensions in the void, and the second term follows from the approximation properties of C‑HiDeNN.

\paragraph{Consistency error}
The exact solution satisfies the unstabilized weak form $A_0(W^h, u) = L(W^h, b)$. Consequently,
\[
    A_\gamma(\eta^h, u) - L(\eta^h, b) = \int_{\mathcal{D}} \gamma\, \nabla(\eta^h/\Phi) \cdot \nabla(\hat{v}) \,\mathrm{d}\mathcal{D}.
\]
As $\int_{\mathcal{D}\setminus\Omega} \nabla w \cdot \nabla(\hat{v}) \,\mathrm{d}\mathcal{D}$ for any weighting function $w\in H^1(\mathcal{D}\setminus\Omega)$ and $w|_{\partial \Omega} = 0 $, this residual reduces to
\begin{equation}
    A_\gamma(\eta^h, u) - L(\eta^h, b) = \int_{\Omega} \gamma\, \nabla(\eta^h/\Phi) \cdot \nabla(\hat{v}) \,\mathrm{d}\Omega.
\end{equation}
By the Cauchy–Schwarz inequality and the definition of the $\gamma$‑norm,
\begin{align*}
    \bigl| A_\gamma(\eta^h, u) - L(\eta^h, b) \bigr|
    &\le \sup_{\mathbf{x}\in\Omega \gamma(\mathbf{x})} \bigl\|  \nabla(\eta^h/\Phi) \bigr\|_{L^2(\Omega)} \,
       \bigl\| \nabla(\hat{v}) \bigr\|_{L^2(\Omega)}.
\end{align*}
Using the inequality
    \begin{equation}
        \|\eta^h\|^2_\gamma \geq  A_0 (\eta^h, \eta^h)=\|\nabla \eta^h\|^2_{L^2(\Omega)},
    \end{equation}
we obtain
\[
    \sup_{\eta^h \in \mathcal{V}^h\setminus\{0\}} \frac{|A_\gamma(\eta^h, u) - L(\eta^h, b)|}{\|\eta^h\|_\gamma}
    \le C_2 \sup_{\mathbf{x}\in\Omega} \gamma(\mathbf{x}) \sup_{\eta^h \in \mathcal{V}^h\setminus\{0\}} \dfrac{\|\nabla \eta^h/\Phi\|_{L^2(\Omega)}}{\|\nabla \eta^h\|_{L^2(\Omega)}} \cdot \|u\|_{H^1(\Omega)}.
\]

Combining the two bounds in~\eqref{eq:strang} gives
\[
    \|u - u^h\|_\gamma \le C_1 h^{p}\, \|\tilde{v}\|_{H^{p+1}(\mathcal{D})}
    + C_2 \sup_{\mathbf{x}\in\Omega} \gamma(\mathbf{x}) \sup_{\eta^h \in \mathcal{V}^h\setminus\{0\}} \dfrac{\|\nabla \eta^h/\Phi\|_{L^2(\Omega)}}{\|\nabla \eta^h\|_{L^2(\Omega)}} \cdot \|u\|_{H^1(\Omega)}
    + C_3 \sqrt{\sup_{\mathbf{x}\in\mathcal{D}\setminus\Omega} \gamma(\mathbf{x})}\, \|\hat{v} - \tilde{v}\|_{H^1(\mathcal{D}\setminus\overline{\Omega})},
\]
which is precisely the statement of Theorem~\ref{thm:error}.

\section{Proof of Theorem~\ref{thm:error_tucker}}
\label{appenx:Proof_tucker}

We follow the structure of the proof of Theorem~\ref{thm:error} (Appendix~\ref{appenx:Proof}), treating the Tucker perturbation fields $\delta_1, \delta_2, \delta_3$ as additional stabilization-like terms. For simplicity, we consider homogeneous Dirichlet boundary condition ($g^h = 0$); the non-homogeneous case follows by a standard lifting argument.


Substituting the Tucker-compressed geometric coefficients $\hat{\Phi}^{(1)}, \hat{\Phi}^{(2)}, \hat{\Phi}^{(3)}$ into the exact Dirichlet weak form~\eqref{eq:weak_strong} yields the Tucker-perturbed bilinear form and linear functional:
\begin{eqnarray}
    \hat{A}_\gamma(w, v) &=& \int_{\mathcal{D}} \bigl(\hat{\Phi}^{(2)} + \gamma(\mathbf{x})\bigr)\,(\nabla w \cdot \nabla v) \,\mathrm{d}\mathcal{D}
                           \;-\; \int_{\mathcal{D}} \hat{\Phi}^{(3)}\, w\, v \,\mathrm{d}\mathcal{D}, \label{eq:A_hat} \\
    \hat{L}(w, b) &=& \int_{\mathcal{D}} \hat{\Phi}^{(1)}\, w\, b \,\mathrm{d}\mathcal{D}. \label{eq:L_hat}
\end{eqnarray}
Using the perturbation fields $\delta_1 = \hat{\Phi}^{(1)} - \Phi$, $\delta_2 = \hat{\Phi}^{(2)} - \Phi^2$, $\delta_3 = \hat{\Phi}^{(3)} - \Phi\,\nabla^2\Phi$, the Tucker forms relate to the exact forms~\eqref{eq:general_weak} as
\begin{eqnarray}
    \hat{A}_\gamma(w, v) &=& A_\gamma(w, v) + \int_{\mathcal{D}} \delta_2\,(\nabla w \cdot \nabla v) \,\mathrm{d}\mathcal{D}
                           \;-\; \int_{\mathcal{D}} \delta_3\, w\, v \,\mathrm{d}\mathcal{D}, \label{eq:A_hat_decomp} \\
    \hat{L}(w, b) &=& L(w, b) + \int_{\mathcal{D}} \delta_1\, w\, b \,\mathrm{d}\mathcal{D}. \label{eq:L_hat_decomp}
\end{eqnarray}
The discrete Tucker solution $u^h$ satisfies $\hat{A}_\gamma(\eta^h, u^h) = \hat{L}(\eta^h, b)$ for all $\eta^h \in \mathcal{V}^h$, while the exact solution $u$ satisfies $A_0(\eta, u) = L(\eta, b)$ for all $\eta \in \mathcal{V}^h$.


Define the Tucker energy norm $\|w\|_{\hat{\gamma}} = \sqrt{\hat{A}_\gamma(w,w)}$. From~\eqref{eq:A_hat_decomp},
\begin{equation}
    \bigl| \|w\|_{\hat{\gamma}}^2 - \|w\|_\gamma^2 \bigr|
    \;\leq\; \|\delta_2\|_{L^\infty(\mathcal{D})} \|\nabla w\|^2_{L^2(\mathcal{D})}
    \;+\; \|\delta_3\|_{L^\infty(\mathcal{D})} \|w\|^2_{L^2(\mathcal{D})}. \label{eq:norm_diff}
\end{equation}
Since $\Phi \in [0,1]$ and $w = \Phi\,\tilde{w}$ for some $\tilde{w} \in H^1(\mathcal{D})$, the $L^2$ term is controlled by the gradient term via the Poincar\'{e}--Friedrichs inequality. Assuming the Tucker compression is sufficiently accurate that $\|\delta_2\|_{L^\infty}$ and $\|\delta_3\|_{L^\infty}$ are small, the two norms are equivalent: there exist constants $c_0, c_1 > 0$ independent of $h$ and of the Tucker rank such that
\begin{equation}
    c_0 \|w\|_\gamma \;\leq\; \|w\|_{\hat{\gamma}} \;\leq\; c_1 \|w\|_\gamma, \qquad \forall\, w \in \mathcal{V}^h. \label{eq:norm_equiv}
\end{equation}
In the estimates below, we absorb $c_0^{-1}$ and $c_1$ into the generic constants $C_2, \dots, C_5$.


Applying Strang's first lemma (Lemma~\ref{lemma1}) to the Tucker form $\hat{A}_\gamma$:
\begin{equation}\label{eq:strang_tucker}
    \|u - u^h\|_{\hat{\gamma}} \;\leq\; 2 \inf_{z^h \in \mathcal{S}^h} \|u - z^h\|_{\hat{\gamma}}
    \;+\; \sup_{\eta^h \in \mathcal{V}^h\setminus\{0\}} \frac{\bigl| \hat{A}_\gamma(\eta^h, u) - \hat{L}(\eta^h, b) \bigr|}{\|\eta^h\|_{\hat{\gamma}}}.
\end{equation}
We estimate the approximation error and the consistency error separately.


Let $u = \Phi \hat{v}$ be the exact solution extended harmonically to $\mathcal{D}$, and let $\tilde{v} \in H^{p+1}(\mathcal{D})$ be the Stein extension. Decompose the approximation error:
\begin{align*}
    2\inf_{z^h \in \mathcal{S}^h} \|u - z^h\|_{\hat{\gamma}}
    &\le 2 \|u - \Phi\tilde{v}\|_{\hat{\gamma}} + 2 \inf_{z^h} \|\Phi\tilde{v} - z^h\|_{\hat{\gamma}}.
\end{align*}
For the first term, using the definition of $\hat{A}_\gamma$ and noting that $u - \Phi\tilde{v} = \Phi(\hat{v} - \tilde{v})$ vanishes on $\Omega$:
\begin{align*}
    \|u - \Phi\tilde{v}\|_{\hat{\gamma}}^2
    &= \int_{\mathcal{D}\setminus\Omega} \bigl(\Phi^2 + \delta_2 + \gamma\bigr) \bigl|\nabla(\hat{v} - \tilde{v})\bigr|^2 \,\mathrm{d}\mathcal{D}
       \;-\; \int_{\mathcal{D}\setminus\Omega} (\Phi\,\nabla^2\Phi + \delta_3)\, |\hat{v} - \tilde{v}|^2 \,\mathrm{d}\mathcal{D} \\
    &\le \bigl(\sup_{\mathbf{x}\in\mathcal{D}\setminus\Omega} \gamma(\mathbf{x}) + \|\delta_2\|_{L^\infty(\mathcal{D}\setminus\Omega)}\bigr) \|\hat{v} - \tilde{v}\|^2_{H^1(\mathcal{D}\setminus\overline{\Omega})}
       \;+\; \|\delta_3\|_{L^\infty(\mathcal{D})} \|\hat{v} - \tilde{v}\|^2_{L^2(\mathcal{D})} \\
    &\le \bigl(\sup_{\mathbf{x}\in\mathcal{D}\setminus\Omega} \gamma(\mathbf{x}) + \|\delta_2\|_{L^\infty(\mathcal{D}\setminus\Omega)} + \|\delta_3\|_{L^\infty(\mathcal{D})}\bigr) \|\hat{v} - \tilde{v}\|^2_{H^1(\mathcal{D}\setminus\overline{\Omega})}.
\end{align*}

For the second term, the C-HiDeNN interpolation estimate in the $\hat{\gamma}$-norm is identical to that in Theorem~\ref{thm:error} up to the perturbation of the norm, which is controlled by~\eqref{eq:norm_equiv}. Hence,
\begin{equation}
    2\inf_{z^h} \|\Phi\tilde{v} - z^h\|_{\hat{\gamma}} \;\leq\; C_1 h^{p}\, \|\tilde{v}\|_{H^{p+1}(\mathcal{D})}.
\end{equation}


Decompose the consistency residual using~\eqref{eq:A_hat_decomp}--\eqref{eq:L_hat_decomp}:
\begin{align}
    \hat{A}_\gamma(\eta^h, u) - \hat{L}(\eta^h, b)
    &= \bigl[A_\gamma(\eta^h, u) - L(\eta^h, b)\bigr] \label{eq:cons_decomp_1} \\
    &\quad + \int_{\mathcal{D}} \delta_2\, \nabla w^h \cdot \nabla \hat{v} \,\mathrm{d}\mathcal{D} \label{eq:cons_decomp_2} \\
    &\quad - \int_{\mathcal{D}} \delta_3\, w^h\, \hat{v} \,\mathrm{d}\mathcal{D} \label{eq:cons_decomp_3} \\
    &\quad - \int_{\mathcal{D}} \delta_1\, w^h\, \tilde{b} \,\mathrm{d}\mathcal{D}, \label{eq:cons_decomp_4}
\end{align}
where $w^h = \eta^h / \Phi$, $\hat{v} = u / \Phi$, and $\tilde{b} = \nabla^2 g^h + b$ (with $b$ the body force).

\paragraph{Term~\eqref{eq:cons_decomp_1}: $\gamma$-stabilization}
From the proof of Theorem~\ref{thm:error} (~\ref{appenx:Proof}), the exact stabilized consistency error satisfies
\begin{equation}
    \bigl| A_\gamma(\eta^h, u) - L(\eta^h, b) \bigr|
    \;\leq\; \sup_{\mathbf{x}\in\Omega} \gamma(\mathbf{x})\, \|\nabla w^h\|_{L^2(\Omega)}\, \|\nabla \hat{v}\|_{L^2(\Omega)}.
\end{equation}
Dividing by $\|\eta^h\|_{\hat{\gamma}}$ and using the norm equivalence~\eqref{eq:norm_equiv} together with the inequality $\|\eta^h\|_\gamma \ge \|\nabla \eta^h\|_{L^2(\Omega)}$ proved in Lemma~\ref{lemma1} yields the bound with $\sup_{\Omega} \gamma(\mathbf{x})$ as in the second term of Eq. ~\eqref{eq:error_estimate_tucker}.

\paragraph{Term~\eqref{eq:cons_decomp_2}: $\delta_2$ stiffness perturbation}
The perturbation $\delta_2$ enters as a coefficient of the stiffness integral, exactly as $\gamma$ does. Splitting the integral over $\Omega$ and $\mathcal{D}\setminus\Omega$:
\[
    \int_{\mathcal{D}} \delta_2\, \nabla w^h \cdot \nabla \hat{v} \,\mathrm{d}\mathcal{D}
    \;=\; \int_{\Omega} \delta_2\, \nabla w^h \cdot \nabla \hat{v} \,\mathrm{d}\Omega
      \;+\; \int_{\mathcal{D}\setminus\Omega} \delta_2\, \nabla w^h \cdot \nabla \hat{v} \,\mathrm{d}(\mathcal{D}\setminus\Omega).
\]
In the interior $\Omega$, the Cauchy--Schwarz inequality gives
\[
    \biggl| \int_{\Omega} \delta_2\, \nabla w^h \cdot \nabla \hat{v} \,\mathrm{d}\Omega \biggr|
    \;\leq\; \|\delta_2\|_{L^\infty(\Omega)}\, \|\nabla w^h\|_{L^2(\Omega)}\, \|\nabla \hat{v}\|_{L^2(\Omega)}.
\]
In the void $\mathcal{D}\setminus\Omega$, the harmonic extension satisfies $\nabla^2 \hat{v} = 0$. Integrating by parts and using $w^h|_{\partial\Omega} = 0$ (since $w^h = \eta^h/\Phi$ and $\eta^h|_{\partial\mathcal{D}} = 0$), the void integral vanishes under the same argument as the $\gamma$ term in Theorem~\ref{thm:error}:
\[
    \int_{\mathcal{D}\setminus\Omega} \delta_2\, \nabla w^h \cdot \nabla \hat{v} \,\mathrm{d}(\mathcal{D}\setminus\Omega) = 0.
\]
Thus, the $\delta_2$ contribution combines additively with $\gamma$ in the interior bound, yielding the term $\|\delta_2\|_{L^\infty(\Omega)}$ inside the second line of~\eqref{eq:error_estimate_tucker}.

\paragraph{Term~\eqref{eq:cons_decomp_3}: $\delta_3$ mass perturbation}
By the Cauchy--Schwarz inequality,
\begin{align*}
    \biggl| \int_{\mathcal{D}} \delta_3\, w^h\, \hat{v} \,\mathrm{d}\mathcal{D} \biggr|
    &\;\leq\; \|\delta_3\|_{L^\infty(\mathcal{D})}\, \|w^h\|_{L^2(\mathcal{D})}\, \|\hat{v}\|_{L^2(\mathcal{D})} \\
    &\;\leq\; \|\delta_3\|_{L^\infty(\mathcal{D})}\, \|\eta^h/\Phi\|_{L^2(\mathcal{D})}\, \|\hat{v}\|_{L^2(\mathcal{D})}.
\end{align*}
Dividing by $\|\eta^h\|_{\hat{\gamma}}$, taking the supremum over $\eta^h$, we obtain the fourth term of~\eqref{eq:error_estimate_tucker}.

\paragraph{Term~\eqref{eq:cons_decomp_4}: $\delta_1$ right-hand side perturbation.}
\begin{align*}
    \biggl| \int_{\mathcal{D}} \delta_1\, w^h\, \tilde{b} \,\mathrm{d}\mathcal{D} \biggr|
    &\;\leq\; \|\delta_1\|_{L^\infty(\mathcal{D})}\, \|w^h\|_{L^2(\mathcal{D})}\, \|\tilde{b}\|_{L^2(\mathcal{D})} \\
    &\;\leq\; \|\delta_1\|_{L^\infty(\mathcal{D})}\, \|\eta^h/\Phi\|_{L^2(\mathcal{D})}\, \|\tilde{b}\|_{L^2(\mathcal{D})}.
\end{align*}


Combining the approximation error and the four consistency error contributions into the Strang decomposition~\eqref{eq:strang_tucker} and converting from $\|\cdot\|_{\hat{\gamma}}$ to $\|\cdot\|_\gamma$ via~\eqref{eq:norm_equiv}, we obtain the error estimate given in Eq. ~\eqref{eq:error_estimate_tucker}.
\hfill $\square$

\bibliography{reference}


\end{document}